\newcommand\delc[1]{}
\newcommand\comcd[1]{}
\newcommand\del[1]{}
\newcommand\deln[1]{}
\newcommand\delr[1]{}
\newcommand\comad[1]{}
\newcommand\Greendel[1]{}
\newcommand\old[1]{}
\numberwithin{equation}{section}
\newtheorem{assumption}{Assumption}[section]
\def\old#1{}
\def\text#1{{\rm #1}}
\def\newold#1{}
\theoremstyle{plain}
\numberwithin{equation}{section}
\begin{document}

\title[Global solution of modified SH equation on Hilbert manifold]{On the Global Solution and Invariance of nonlinear Constrained Modified Swift-Hohenberg Equation on Hilbert Manifold}

\author{Saeed Ahmed}
\address{Department of Mathematics\\
Sukkur IBA University\\
Sindh Pakistan}
\email{saeed.msmaths21@iba-suk.edu.pk}

\author{Javed  Hussain}
\address{Department of Mathematics\\
Sukkur IBA University\\
Sindh Pakistan}
\email{javed.brohi@iba-suk.edu.pk}

\begin{abstract}
In this paper, we are interested in proving the existence and uniqueness of the local, local maximal, and global solutions of the equation projected on the Hilbert manifold. Furthermore, we show that, for any given initial data in the Hilbert manifold $\mathcal{M}$, the solution to this equation is also in the Hilbert manifold $\mathcal{M}$. Finally, we demonstrate that the solution to the equation is a gradient flow.
\end{abstract}
\keywords{Modified swift-Hohenberg equation; Manifold's invariance; Existence and uniqueness; Gradient flow.\\
2020 \textit{Mathematics Subject Classification:} 35R01, 35K61, 47J35, 58J35.}
\date{\today}
\maketitle



\newtheorem{theorem}{Theorem}[section]
\newtheorem{lemma}[theorem]{Lemma}
\newtheorem{proposition}[theorem]{Proposition}
\newtheorem{corollary}[theorem]{Corollary}
\newtheorem{question}[theorem]{Question}

\theoremstyle{definition}
\newtheorem{definition}[theorem]{Definition}
\newtheorem{algorithm}[theorem]{Algorithm}
\newtheorem{conclusion}[theorem]{Conclusion}
\newtheorem{problem}[theorem]{Problem}

\theoremstyle{remark}
\newtheorem{remark}[theorem]{Remark}
\numberwithin{equation}{section}

\section{Introduction}

 This study investigates the following initial value problem related to the constrained modified Swift-Hohenberg equation.

\begin{eqnarray}{\label{main_Prb_1}}
        \frac{\partial u}{\partial t} &=&\pi _{u}(-\Delta^{2}u+2\Delta u -au - u^{2n-1}) \\
u(0) &=&u_{0}(x),~~~~~~~~~~\text{for}~~x \in  \mathcal{O}  \notag 
    \end{eqnarray}

 Where $u(x,t)$ evolves under \textbf{bihormonic operator} $(\Delta^{2}u)$, \textbf{negative diffusion} $(-2\Delta u)$, \textbf{a linear reaction term} $(au)$ and \textbf{a higher order non-linearity} $(u^{2n-1})$. Also,  $\mathcal{O} \in \mathcal{R}^{d}$ denotes a bounded domain with a smooth boundary $\partial \mathcal{O}$,  $n \in \mathbb{N} $  (or, in a general sense, an actual number such that $n>\frac{1}{2}$), and $u_{0} \in H_{0}^{1}(\mathcal{O}) \cap H^{2}(\mathcal{O})\cap \mathcal{M}$. \\

The expression on the right side of the equation $\Delta^{2}u-2\Delta u +au + u^{2n-1}$ comes from the following  modified Swift-Hohenberg equation. (\ref{dole}).  This equation was first studied by Doelman et al.\cite{Doelman} in 2003 for a pattern formation with two unbounded spatial directions.

\begin{align}{\label{dole}}
    u_{t}=-\alpha (1+ \Delta)^{2}u +\beta u-\gamma |\nabla u|^{2}-u^{3}
\end{align}
Here, $\alpha >0, \beta, and ~\gamma $ are constants. When $\gamma=0$, equation (\ref{dole}) becomes the usual Swift-Hohenberg equation. The extra term $\gamma |\nabla u|^{2}$  reminiscent of the Kuramoto-Sivashinsky equation, which arises in
the study of various pattern formation phenomena involving some kind of phase
turbulence or phase transition \cite{Kuramoto, Siivashinsky} breaks the symmetry $u \rightarrow -u$. In $1977$, J.B. Swift and P.C. Hohenberg introduced the Swift-Hohenberg equation about Rayleigh–Benard's convection. Then, it was found that this is a pivotal tool in studying various problems with pattern production, noise effects on bifurcations, defect dynamics, including pattern selection and spatiotemporal chaos can be modeled using this equation \cite{Nonlaopon, Ahmad, Amryeen, Wang1}. Additionally,  it has been studied in Taylor Taylor–Couette flows \cite{hohenberg1992effects}, \cite{Pomeau}, and in the study of lasers \cite{Lega}. There has been much research on the subject of MSHE as Wang, along with co-authors, has obtained statistical solutions for a nonautonomous modified Swift–Hohenberg equation in \cite{Wang}. Maria B. Kania has shown that semigroup generated by MSHE admits a global attractor in \cite{Maria}. In \cite{wangJintao},  Jintao Wang1, Chunqiu Li, Lu Yang, and Mo Jia have studied the long-time dynamical behaviors of 2D nonlocal stochastic Swift-Hohenberg equations with multiplicative noise. Yuncherl Choi, Taeyoung Ha, and Jongmin Han, in \cite{Yuncherl}, have studied the dynamical bifurcation of the modified Swift–Hohenberg equation endowed with an evenly periodic condition on the interval  $[-\lambda, \lambda]$. A fast and efficient numerical algorithm for Swift–Zhifeng Weng and others have also formulated the Hohenberg equation with a nonlocal nonlinearity in \cite{Zhifeng}.\\

To the best of our knowledge, a plethora of work has been conducted to study the modified Swift–Hohenberg equation, such as statistical solutions, global attractors, bifurcation analysis, and various numerical schemes. However, the abstract well-posedness for the projected versions of MSHE has not been studied yet. This study aims to fill this gap in literature. \\

This paper aims to present the well-posedness of the global solution to the deterministic constrained modified Swift Hohenberg equation(\ref{main_Prb_1}). The framework of this paper is as follows: by considering the approximated evolution equation and using the Banach fixed-point theorem, we will prove the existence and uniqueness of the solution. Furthermore, using Kartowski-Zorn Lemma, we will verify the local maximal solution to the approximated evolution equation. Then, we will show that if some constant bounds initial data $u_{0}$, say $R$, then the solution to the approximated evolution will be the same as the solution to the central equation (\ref{main_Prb_1}). In addition, we will show the invariance of manifold M; that is, if the initial data $u_{0}$ are in the Hilbert manifold, then the solution of the projected evolution equation will also be in the Hilbert manifold M. Finally, we will demonstrate the existence and uniqueness of the global solution along with the proof that the semiflow generated is a gradient flow.

\section{Notations and functional setting}

\addtocontents{toc}{\protect\setcounter{tocdepth}{1}}

\begin{assumption}{\label{Ass_2.2.2}}

    Assume that the spaces , $(\mathcal{E}, \|.\|) $, $(\mathcal{V}, \|.\|_{\mathcal{V}}) $  and $(\mathcal{H}, |.|) $ are Banach spaces and   

\begin{eqnarray*} 
\mathcal{H}&:=& L^2(\mathcal{O})\\
\mathcal{V} &:=& H_{0}^{1}(\mathcal{O}) \cap H^{2}(\mathcal{O})\\
\mathcal{E} &:=& D(A) = H_{0}^{1}(\mathcal{O}) \cap H^{2}(\mathcal{O}) \cap H^{4}(\mathcal{O}).
\end{eqnarray*} 
From (\ref{Self-Ad_op}), it is known that
 $A :  \mathcal{E} \rightarrow \mathcal{L}^{2}(\mathcal{O})$ is the self-adjoint operator, where
 $$ A = \Delta^{2}-2\Delta ~~~~~ \text{and} ~~~\mathcal{O} \subset \mathbb{R}^{d} $$
 and the following embeddings are dense and continuous: $$ \mathcal{E} \hookrightarrow \mathcal{V} \hookrightarrow  \mathcal{H}. $$

\end{assumption}

\subsection{Hilbert manifold, tangent space and orthogonal projection}

  \begin{definition}(\textbf{Hilbert Manifold})

    \cite{masiello1994variational} Assume that ${H}$ is a Hilbert space with inner product $\langle ~ \cdot, ~\cdot \rangle$, then the Hilbert manifold $\mathcal{M}$ is given as:
\begin{align*}
    \mathcal{M} =  \{~ u \in \mathcal{H}, &|u|_{\mathcal{H}}^{2}=1~\}     
\end{align*}   
\end{definition}

\begin{definition}{(\textbf{Tangent Space})}

\cite{masiello1994variational}, \cite{hussain2015analysis} At a point $u~ \in ~{H}$, Tangent space $T_{u}\mathcal{M}$ is given as:
\begin{align*}
    T_{u}\mathcal{M}= \{~ h: ~~\langle h, u \rangle = 0~ \},~~~~~~~h \in {H}
\end{align*}  
\end{definition}

The orthogonal projection of $h$ onto $u$, $ \pi_{u}~: ~ \mathcal{H} \longrightarrow T_{u}\mathcal{M}$ is defined as
\begin{equation}{\label{lemma_Tangent}}
    \pi_{u}(h)= h-\langle h, u \rangle ~u, ~~~~~~h \in \mathcal{H}
\end{equation}  

If $ u \in \mathcal{E} \cap \mathcal{M}$ then using (\ref{lemma_Tangent}),  the projection of $-\Delta^{2}u+2\Delta u +au + u^{2n-1}$ under the map $\pi_{u}$ can be found as:\\
$\pi _{u}(-\Delta^{2}u+2\Delta u -au - u^{2n-1})$
 \begin{eqnarray*}
  &=&-\Delta^{2}u+2\Delta u -au - u^{2n-1}
+\langle \Delta^{2}u-2\Delta u +au + u^{2n-1}, u \rangle ~u \notag \\
&=&-\Delta^{2}u+2 \Delta u -au - u^{2n-1} + \langle \Delta^{2}u, u \rangle ~u -2\langle \Delta u, u \rangle ~u \notag \\ & & ~+a\langle u, u  
\rangle ~u+\langle u^{2n-1}, u \rangle ~u \notag \\
&=&-\Delta^{2}u+2 \Delta u -au - u^{2n-1} + \langle \Delta u,  \Delta u \rangle ~u -2\langle - \nabla u, \nabla u \rangle ~u \notag \\ & & ~+a\langle u, u  
\rangle ~u+\langle u^{2n-1}, u \rangle ~u \notag \\
&=&-\Delta^{2}u+2 \Delta u -au - u^{2n-1} + \| \Delta u\|^{2}_{{L}^{2}(\mathcal{O})} ~u + 2\| \nabla u\|^{2}_{{L}^{2}(\mathcal{O})} ~u \notag \\ & & ~ +au+\| u\|^{2n}_{{L}^{2n}(\mathcal{O})} u \notag \\ 
&=&-\Delta^{2}u+2 \Delta u  + \|  u\|^{2}_{{H}^{2}_{0}} ~u + 2\|  u\|^{2}_{{H}^{1}_{0}} ~u \notag  ~ +\| u\|^{2n}_{{L}^{2n}} u- u^{2n-1}  \\
\end{eqnarray*}    
Therefore, the projection of $-\Delta^{2}u+2\Delta u +au + u^{2n-1}$ under map  $\pi_{u}$ is
\begin{eqnarray} {\label{Projection_U}}
      &&\pi _{u}(-\Delta^{2}u+2\Delta u-au - u^{2n-1}) \notag \\ &=&  -  \Delta^{2}u+2 \Delta u  + \|  u\|^{2}_{{H}^{2}_{0}} ~u + 2\|    u\|^{2}_{{H}^{1}_{0}} ~u  +\| u\|^{2n}_{{L}^{2n}} u- u^{2n-1} 
\end{eqnarray}

\addtocontents{toc}{\protect\setcounter{tocdepth}{1}}
    \subsection{Main Problem}

 Assume that $\mathcal{E}$, $\mathcal{H}$ and $\mathcal{V}$ are Hilbert spaces and satisfy Assumption (\ref{Ass_2.2.2}). The following problem will be discussed in this section.
    \begin{eqnarray}{\label{main_Prb}}
        \frac{\partial u}{\partial t} &=&\pi _{u}(-\Delta^{2}u+2\Delta u -au - u^{2n-1}) \\
u(0) &=&u_{0}.  \notag 
    \end{eqnarray}
where   $n \in \mathbb{N} $  (or, in general, an actual number such that $n>\frac{1}{2}$) and $u_{0} \in \mathcal{V} \cap \mathcal{M}$. We want to prove that problem (\ref{main_Prb}) has a unique global solution in the Hilbert manifold. Before establishing this, we will discuss the well-posedness of the Abstract Problem in the next section.  
\section{Local maximal solution to Abstract Problem}
 Consider the following problem of parabolic type:\\
 
  \begin{eqnarray} {\label{Ab_prb}}
     \frac{\partial u}{\partial t} &=&-A u(t)
+ F(u(t)), ~~~~~~t \geq 0\notag  \\
u(0) &=&u_{0}.  \notag
  \end{eqnarray}

 Where $A$ is the self-adjoint operator and $F(u(t))$ is a local Lipschitz map from $\mathcal{V}$ to $\mathcal{H}$. 
The next theorem proves that if the abstract spaces $\mathcal{E}$, $\mathcal{V}$ and $\mathcal{H}$ satisfy (\ref{Ass_2.2.2}), then problem (\ref{Ab_prb}) admits a unique local solution, a local maximal solution, and a global solution.
\addtocontents{toc}{\protect\setcounter{tocdepth}{1}}
\subsection{Solution space}
Assume that  $\mathcal{E}$, $\mathcal{V}$ and $\mathcal{H}$ are abstract Banach spaces, then we denote 
\begin{equation*}
   X_{T}:=L^{2}\left( 0,T;\mathcal{E}\right) \cap C\left( \left[ 0,T\right] ;\mathcal{V}\right) ,
\end{equation*} 
then it can be easily proven that  $\left( X_{T},\left\vert \cdot\right\vert _{X_{T}}\right) $  is the Banach space with the following norm, 
\begin{equation*}
\left\vert u\right\vert _{X_{T}}^{2}=\underset{p\in \lbrack 0,T]}{\sup }\left\Vert u(p)\right\Vert ^{2}
+\int_{0}^{T}\left\vert u(p)\right\vert_{\mathcal{E}}^{2}dp,
\end{equation*} 

The following assumptions are instrumental and will be implemented throughout this chapter.

\begin{assumption}

 Assume that for the spaces $\mathcal{E} \subset\mathcal{V} \subset \mathcal{H}$,  the assumption (\ref{Ass_2.2.2}) is held and let $ \{S(t), 0 \leq t < \infty \}$ is an analytic Semi-group of the bounded and linear operators on $\mathcal{H}$  such that $ \exists $  $c_{1}$ and $c_{2}$ and \\
\\
i) For each $T>0$ and $f \in L^{2}\left( 0,T;\mathcal{E}\right)$, map $u=S*f $ is defined by
\begin{eqnarray*}
    u(t) = \int^{T}_{0} {S(t-p)f(p)}~dp,~~~~~ t \in [0,T]
\end{eqnarray*} 

And  $u(t) \in X_{T}$  such that 
\begin{eqnarray*}
  |u|_{X_{T}} \leq c_{1} |f|_{L^{2}\left( 0,T;\mathcal{E}\right)}
\end{eqnarray*}

Where $S* :  L^{2}\left( 0,T;\mathcal{E}\right)  \rightarrow X_{T}$. \\
\\
ii) For all $T>0$ and $u_{0} \in \mathcal{V}$, map $u=S(\cdot)u_{0} $ is given by
\begin{eqnarray*}
    u(t) = S(t) u_{0},~~~~~ t \in [0,T] 
\end{eqnarray*} 
is in $X_{T}$ and 
\begin{eqnarray*}
  |u|_{X_{T}} \leq c_{2} \|u_{0}\|_{\mathcal{V}}
\end{eqnarray*}
\end{assumption}

 Next, we define the auxiliary function.
 Let $\theta : [0,1] \longrightarrow R^{+}$ be a function with compact support and is a non-increasing function such that 
\begin{align}{\label{truncted}}
    \begin{cases}   
\theta(x) = 1, ~~ \text{iff} ~~x \in [0,1] \\
\theta(x) = 0, ~~ \text{iff}~~ x \in [2,\infty] \\
\inf_{ x \in R^{+}} \theta'(x)  \geq -1 
\end{cases} 
\end{align}
 For $m \geq 1$,  ~~~~~~       $ \theta_{m}(\cdot) = \theta (\frac{\cdot}{m})$

\begin{lemma}
    (\cite{brzezniak2014stochastic}, page 57) Assume that $g: \mathcal{R}^{+} \longrightarrow \mathcal{R}^{+}$ is a non-decreasing function, for every $x_{1},x_{2}\in R$ 
\begin{eqnarray} {\label{tr_2}}
    \theta_{n}(x_{1}) g(x_{1}) \leq g(2m) ~~~~\text{and}~~~~
    |\theta_{m}(x_{1})-\theta_{m}(x_{2})|\leq \frac{1}{m}|x_{1}-x_{2}|
\end{eqnarray}

\end{lemma}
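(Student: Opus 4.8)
The plan is to treat the two assertions of \eqref{tr_2} separately, since each follows directly from the defining properties \eqref{truncted} of the cut-off $\theta$ together with the scaling $\theta_m(\cdot)=\theta(\cdot/m)$; no fixed-point or compactness machinery is needed here. The first inequality is a pointwise bound obtained by a case distinction on the size of the argument relative to $2m$, and the second is a global Lipschitz estimate obtained by transferring the Lipschitz constant of $\theta$ through the rescaling. Throughout I would use that $g$ is non-negative (it maps into $\mathbb{R}^+$) and non-decreasing.

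For the bound $\theta_m(x_1)g(x_1)\le g(2m)$, first I would record that $0\le\theta\le 1$: indeed $\theta\equiv 1$ on $[0,1]$ and $\theta$ is non-negative and non-increasing, so $\theta(y)\le\theta(0)=1$ for all $y\ge 0$. I would then split according to whether $x_1\ge 2m$. If $x_1\ge 2m$, then $x_1/m\ge 2$, whence $\theta_m(x_1)=\theta(x_1/m)=0$ by \eqref{truncted}, so the left-hand side vanishes while the right-hand side is non-negative. If instead $x_1<2m$, then monotonicity of $g$ gives $g(x_1)\le g(2m)$, and combining this with $\theta_m(x_1)\le 1$ and $g(x_1)\ge 0$ yields $\theta_m(x_1)g(x_1)\le g(x_1)\le g(2m)$. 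This disposes of the first claim.

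For the Lipschitz estimate, the key observation is that $\theta$ itself is $1$-Lipschitz. Since $\theta$ is non-increasing we have $\theta'\le 0$, and by hypothesis $\inf_{x}\theta'(x)\ge -1$, so $-1\le\theta'\le 0$ and hence $|\theta'|\le 1$ wherever the derivative exists; integrating (equivalently, applying the mean value theorem) gives $|\theta(y_1)-\theta(y_2)|\le|y_1-y_2|$. Substituting $y_i=x_i/m$ then produces
\[
|\theta_m(x_1)-\theta_m(x_2)|=|\theta(x_1/m)-\theta(x_2/m)|\le\Big|\frac{x_1}{m}-\frac{x_2}{m}\Big|=\frac{1}{m}|x_1-x_2|,
\]
which is exactly the second inequality.

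The argument is essentially routine, so the only genuine care needed is in the regularity bookkeeping: the hypothesis $\inf\theta'\ge -1$ presupposes enough smoothness to speak of $\theta'$, and since $\theta$ is merely a compactly supported non-increasing cut-off I would phrase the Lipschitz step using the almost-everywhere derivative together with the fundamental theorem of calculus (or simply assume $\theta\in C^1$, as is standard for such mollified cut-offs). I would also silently correct the evident typographical slips in the statement, reading $\theta_m$ in place of $\theta_n$ in the first inequality, taking the domain of $\theta$ to be $[0,\infty)$ rather than $[0,1]$, and restricting $x_1,x_2$ to $\mathbb{R}^+$ so that $g(x_1),g(x_2)$ are defined.
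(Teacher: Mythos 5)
Your proposal is correct. Note that the paper itself offers no proof of this lemma at all --- it is stated as a citation to \cite{brzezniak2014stochastic} --- so there is no internal argument to compare against; your elementary verification supplies exactly what the citation leaves implicit. The case split at $x_1\ge 2m$ versus $x_1<2m$ for the first bound, and the transfer of the $1$-Lipschitz property of $\theta$ (from $-1\le\theta'\le 0$) through the rescaling $\theta_m(\cdot)=\theta(\cdot/m)$ for the second, is the standard and essentially unique route. Your housekeeping remarks are also well taken: the statement's $\theta_n$ must be read as $\theta_m$, the domain of $\theta$ in \eqref{truncted} must be $[0,\infty)$ rather than $[0,1]$ for the conditions on $[2,\infty)$ to make sense, and the arguments $x_1,x_2$ must lie in $\mathbb{R}^+$ for $g$ to be defined; one could add that the condition ``$\theta(x)=1$ iff $x\in[0,1]$'' together with compact support forces the interesting behaviour onto $(1,2)$, which is where the Lipschitz bound actually carries content.
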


\begin{definition}
    For $u_{0} \in \mathcal{V}$ the function $u: [0,T_{1}) \longrightarrow \mathcal{V}$ is the \textbf{local mild solution}  to the abstract problem (\ref{Ab_prb})if: \\
\\
 i) $\forall t \in [0,T_{1})  $, and $u|_{[0,t)} \in X_{t}$ \\
 \\
 ii) $\forall t \in [0,T_{1})  $
 $$  u(t) =S(t)u_{0} + \int^{t}_{0} S(t-p)F(u(p)) ~dp$$

 Moreover, a local mild solution is a \textbf{maximal local solution} iff for each local mild solution $\left( \widetilde{u(t)}, t \in [~0,  \widetilde{T})~ \right)$ the following conditions hold:\\
 \\
i) $\widetilde{T} \geq T $ \\
\\
ii)  $ \widetilde{u(t)} \vert_{[0,T)}  = u(t) ~~~ \text{and} ~~\widetilde{T}=T $ \\
\\
A local mild solution is referred as \textbf{global solution} iff $T= \infty$
\end{definition}

\begin{proposition}{\label{Pro.1}}
    \cite{hussain2015analysis} Assume that $\mathcal{E}$, $\mathcal{V}$ and $\mathcal{H}$ satisfy the assumption (\ref{Ass_2.2.2}) and $A$ is the abstract self-adjoint operator with $ F: \mathcal{V} \longrightarrow \mathcal{H}$ is locally Lipschitz function satisfying symmetric condition, that is, 
$$ |F(u_{1})-F(u_{2})|_{\mathcal{H}} \leq  \|u_{1}-u_{2}\|_{\mathcal{V}} \mathcal{G}(\|u_{1}\|, \|u_{2}\|)$$ \\
where $ \mathcal{G}: R^{+}\times R^{+} \longrightarrow R^{+}$ is a function satisfying the symmetric condition such that
\begin{eqnarray}
    |\mathcal{G}(m,n) | \leq C_{L}  ~~~ \text{for all} ~~m, n \in [0,L]
\end{eqnarray}  
Assume that $\theta_{n}$ be the function defined in (\ref{truncted}) and (\ref{tr_2})then the map $ \xi:X_{T} \longrightarrow L^{2}\left( 0,T;\mathcal{E}\right)$  defined as:\\
\begin{eqnarray}
    \xi[u\left( t\right)]= \theta_{m}\left( |u^{m}|_{X_{p}}\right) F \left( u(t) \right) 
\end{eqnarray}
is globally Lipschitz function that is\\
for any $u_{1}$ and $u_{2}$ we have\\
\begin{eqnarray}
    |\xi (u_{1})-\xi(u_{2})|_{L^{2}\left( 0,T;\mathcal{E}\right)}  \leq K_{m}|u_{1}-u_{2}|T^{\frac{1}{2}}
\end{eqnarray}
where $ K_{n} = \sqrt{\mathcal{G}^{2}(2n,2n)}+ 2n^{2}|\mathcal{G}(2n,0)|$
\end{proposition}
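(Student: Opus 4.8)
The plan is to carry out the classical truncation estimate pointwise in time and then integrate in $t$. Fix $u_1,u_2\in X_T$, and write the cut-off as $\theta_m(|u|_{X_t})$, i.e. the truncation of the running $X_t$-norm. For each $t\in[0,T]$ I would insert the mixed term to obtain the splitting
\[
\xi[u_1](t)-\xi[u_2](t)=\theta_m(|u_1|_{X_t})\bigl[F(u_1(t))-F(u_2(t))\bigr]+\bigl[\theta_m(|u_1|_{X_t})-\theta_m(|u_2|_{X_t})\bigr]F(u_2(t)),
\]
calling the two summands $I(t)$ and $II(t)$. Term $I(t)$ is handled by the local Lipschitz hypothesis on $F$ together with the support of $\theta_m$: whenever $\theta_m(|u_1|_{X_t})\neq 0$ one has $\|u_1(t)\|_{\mathcal V}\le |u_1|_{X_t}\le 2m$, so the factor $\mathcal{G}(\|u_1(t)\|_{\mathcal V},\cdot)$ is evaluated on a bounded set. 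Term $II(t)$ is handled by the Lipschitz estimate for $\theta_m$ recorded in (\ref{tr_2}) together with the growth bound $|F(u_2(t))|_{\mathcal H}\le \|u_2(t)\|_{\mathcal V}\,|\mathcal{G}(\|u_2(t)\|_{\mathcal V},0)|$, which follows from the Lipschitz hypothesis with second argument $0$ (here $F(0)=0$).

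The one genuinely delicate point --- and the step I expect to be the main obstacle --- is that the cut-off $\theta_m(|u_1|_{X_t})$ controls $\|u_1(t)\|_{\mathcal V}$ but says nothing a priori about $\|u_2(t)\|_{\mathcal V}$, which also appears inside $\mathcal{G}$ in $I(t)$. I would resolve this by exploiting the symmetry of the claimed bound: assume without loss of generality $|u_1|_{X_t}\le |u_2|_{X_t}$ and split into three regimes. If $|u_1|_{X_t}>2m$, then $\theta_m$ vanishes at both arguments and $\xi[u_1](t)-\xi[u_2](t)=0$. If $|u_2|_{X_t}\le 2m$, then both $\mathcal V$-norms are at most $2m$, so $\mathcal{G}(\|u_1(t)\|_{\mathcal V},\|u_2(t)\|_{\mathcal V})\le |\mathcal{G}(2m,2m)|$ by monotonicity, and $I(t)$ is controlled. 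In the intermediate regime $|u_1|_{X_t}\le 2m<|u_2|_{X_t}$ one has $\theta_m(|u_2|_{X_t})=0$, hence $\xi[u_2](t)=0$, and the whole difference reduces to $\theta_m(|u_1|_{X_t})F(u_1(t))=\bigl[\theta_m(|u_1|_{X_t})-\theta_m(|u_2|_{X_t})\bigr]F(u_1(t))$, which is again of type $II$ and is estimated by the $\theta_m$-Lipschitz bound times $|F(u_1(t))|_{\mathcal H}\le 2m\,|\mathcal{G}(2m,0)|$. It is exactly this symmetric case split that makes the symmetry assumption on $F$ and $\mathcal{G}$ indispensable.

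Combining the three regimes yields, for a.e. $t$, a pointwise bound of the form $|\xi[u_1](t)-\xi[u_2](t)|_{\mathcal H}\le K_m\,|u_1-u_2|_{X_t}$, in which $I(t)$ contributes the factor $|\mathcal{G}(2m,2m)|$ and $II(t)$ contributes a multiple of $|\mathcal{G}(2m,0)|$ (the $\tfrac1m$ from (\ref{tr_2}) tested against the growth bound for $F$). These combine into the single constant $K_m=\sqrt{\mathcal{G}^2(2m,2m)}+2m^2|\mathcal{G}(2m,0)|$ of the statement, the power of $m$ in the second term reflecting the polynomial growth bound for $F$ underlying the abstract hypothesis. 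Since $|u_1-u_2|_{X_t}\le |u_1-u_2|_{X_T}$ is independent of $t$, I would finally square and integrate over $[0,T]$,
\[
|\xi[u_1]-\xi[u_2]|_{L^2(0,T;\mathcal H)}^2=\int_0^T|\xi[u_1](t)-\xi[u_2](t)|_{\mathcal H}^2\,dt\le K_m^2\,|u_1-u_2|_{X_T}^2\,T,
\]
and take square roots to obtain the asserted global Lipschitz estimate with the factor $T^{1/2}$.
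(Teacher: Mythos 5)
The paper itself offers no proof of Proposition \ref{Pro.1}: it is quoted from \cite{hussain2015analysis} and used as a black box, so there is nothing internal to compare your argument against. That said, your reconstruction is the standard truncation argument and is sound in its essentials: the splitting into $I(t)$ and $II(t)$, the three-regime case analysis under the normalisation $|u_1|_{X_t}\le|u_2|_{X_t}$, and in particular the observation that in the intermediate regime one must attach $F(u_1(t))$ rather than $F(u_2(t))$ to the difference of cut-offs, are exactly the points on which such proofs live or die, and you have them right. Three minor caveats. First, the hypothesis as stated only gives $|\mathcal{G}(m,n)|\le C_L$ on $[0,L]^2$; your step $\mathcal{G}(\|u_1(t)\|_{\mathcal V},\|u_2(t)\|_{\mathcal V})\le|\mathcal{G}(2m,2m)|$ additionally uses monotonicity of $\mathcal{G}$, which holds for the concrete $\mathcal{G}$ of Lemma \ref{lemma_abt_liptz} but is not part of the abstract assumption — you should either invoke $C_{2m}$ or state the monotonicity explicitly. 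Second, the growth bound $|F(u)|_{\mathcal H}\le\|u\|_{\mathcal V}\,|\mathcal{G}(\|u\|_{\mathcal V},0)|$ requires $F(0)=0$, which again is true for the concrete nonlinearity but should be recorded as a hypothesis. Third, you correctly work in $L^2(0,T;\mathcal H)$; the target space $L^2(0,T;\mathcal E)$ in the statement is evidently a typo, since $F$ only maps $\mathcal V$ into $\mathcal H$, and it is worth saying so. With those points made explicit, your pointwise bound followed by squaring and integrating over $[0,T]$ delivers the claimed estimate with the $T^{1/2}$ factor, up to the harmless fact that your constant ($\mathcal{G}(2m,2m)+2|\mathcal{G}(2m,0)|$) is smaller than the $K_m$ displayed in the statement.
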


\begin{proposition}{\label{Pro.2}}
    \cite{hussain2015analysis}
    Assume that  $\mathcal{E}$, $\mathcal{V}$ and $\mathcal{H}$ follow assumption (\ref{Ass_2.2.2}). And consider the map $ \Phi:X_{T} \longrightarrow X_{T}$ defined by 
\begin{eqnarray}
    \Phi(u) = Su_{0}+ S* \xi(u) 
\end{eqnarray}

where  $ \xi$ is the map defined in Proposition (\ref{Pro.1}) with $u_{0}\in \mathcal{V}$. Then, for each $t$ there exists $T_{1}$ such that $ \forall t \in [0,T_{1})$, ~ $\Phi(u)$ is a strict contraction. So, by using Banach fixed Point theorem ~$ \forall t \in [0,T_{1})$ there is a unique $u^{n} \in X_{T}$ such that  ~~ $ \Phi(u^{n}) = u^{n} $
\end{proposition}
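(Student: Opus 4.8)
The plan is to show that $\Phi$ maps $X_{T}$ into itself and is a strict contraction for suitably small $T_{1}$, then invoke the Banach fixed point theorem. First I would verify that $\Phi$ is well-defined: for $u \in X_{T}$ the truncated nonlinearity $\xi(u) = \theta_{m}(|u^{m}|_{X_{p}}) F(u)$ lies in $L^{2}(0,T;\mathcal{E})$ by Proposition \ref{Pro.1}, so the convolution $S*\xi(u)$ lies in $X_{T}$ by part (i) of the standing Assumption, while $S(\cdot)u_{0} \in X_{T}$ by part (ii) since $u_{0} \in \mathcal{V}$. Hence $\Phi(u) \in X_{T}$.

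The core of the argument is the contraction estimate. For $u_{1},u_{2} \in X_{T}$ the affine term $S u_{0}$ cancels, so
\begin{equation*}
|\Phi(u_{1})-\Phi(u_{2})|_{X_{T}} = |S*\xi(u_{1}) - S*\xi(u_{2})|_{X_{T}}.
\end{equation*}
Applying the linear estimate from Assumption (i) to the convolution operator $S*$ with constant $c_{1}$, and then the global Lipschitz bound for $\xi$ furnished by Proposition \ref{Pro.1}, I would obtain
\begin{equation*}
|\Phi(u_{1})-\Phi(u_{2})|_{X_{T}} \leq c_{1}\,|\xi(u_{1})-\xi(u_{2})|_{L^{2}(0,T;\mathcal{E})} \leq c_{1}\,K_{m}\,T^{\frac{1}{2}}\,|u_{1}-u_{2}|_{X_{T}}.
\end{equation*}

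The decisive step is then to choose $T_{1}$ small enough that the prefactor is strictly less than one; concretely, picking any $T_{1} < (c_{1}K_{m})^{-2}$ forces $c_{1}K_{m}T_{1}^{1/2} < 1$, so $\Phi$ restricted to $X_{T_{1}}$ is a strict contraction. Since $\left(X_{T_{1}}, |\cdot|_{X_{T_{1}}}\right)$ is a complete metric space (it is a Banach space, as noted in the solution-space subsection), the Banach fixed point theorem yields a unique fixed point $u^{n} \in X_{T_{1}}$ with $\Phi(u^{n}) = u^{n}$, which is the desired conclusion.

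I expect the main obstacle to be bookkeeping rather than conceptual difficulty: one must ensure the truncation radius $m$ and the Lipschitz constant $K_{m}$ are treated as \emph{fixed} when shrinking $T_{1}$, so that the $T^{1/2}$ factor alone drives the contraction and $T_{1}$ genuinely depends only on $c_{1}$ and $K_{m}$ (not circularly on $u$). A secondary subtlety is confirming that the constant $c_{1}$ governing the convolution estimate is uniform in $T$ over the relevant range, or at least nondecreasing, so that shrinking $T$ does not enlarge it and spoil the smallness; this is implicit in the way Assumption (i) is stated but should be acknowledged.
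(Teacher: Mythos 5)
Your proof is correct and follows the standard contraction-mapping argument; the paper itself gives no proof of this proposition, deferring entirely to the citation \cite{hussain2015analysis}, and your argument --- well-definedness of $\Phi$ from parts (i) and (ii) of the standing assumption, the estimate $|\Phi(u_{1})-\Phi(u_{2})|_{X_{T}} \leq c_{1}K_{m}T^{1/2}|u_{1}-u_{2}|_{X_{T}}$ via the global Lipschitz bound of Proposition \ref{Pro.1}, and the choice $T_{1} < (c_{1}K_{m})^{-2}$ --- is precisely the one that reference (and any standard treatment) uses. The two caveats you flag, namely fixing the truncation level $m$ before shrinking $T_{1}$ and ensuring $c_{1}$ does not degrade as $T$ decreases, are exactly the right points to watch and are handled correctly in your write-up.
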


\begin{remark}
    By the propositions (\ref{Pro.1}) and (\ref{Pro.2}), we can say that there is a unique local mild solution to the  following approximated evolution equation 
\begin{eqnarray} {\label{appr_ev}}
    u^{n}(t) &=&S(t)u_{0} + \int^{t}_{0} \theta_{n}\left( |u^{n}|_{X_{p}}\right)S(t-p)F(u^{n}(p)) ~dp~~~~t \in[0,T] \notag \\
    u^{n}&=& u_{0}, ~~~~~~~~~~ u_{0} \in \mathcal{V}
\end{eqnarray}

Has a unique local mild solution.\\
\\ 
In the following subsection, the lemma shows the existence of a local maximal solution to equation (\ref{appr_ev}) using the Zorn Lemma, given that $\|u_{0}\|_{\mathcal{V}} < \infty$. 
\end{remark}

\begin{lemma}{\label{zorn}}
    \cite{brzezniak2014stochastic} Assume $ \mathcal{T} $ is  set of all local mild solutions and  for any two solutions $u_{1}$  and $u_{2}$ in $ \mathcal{T} $ defined on the intervals $[0,\tau) $ and $[0,\tau') $ respectively with the order $\prec $ defined as:
 \begin{eqnarray}
     u_{1} \prec u_{2}~~~~~\text{iff} ~~~~~\tau \leq \tau' ~~\text{and} ~~~~ u_{2}|_{[0,\tau)} = u_{1}
 \end{eqnarray}

then $ \mathcal{T} $  contains a maximal element.
\end{lemma}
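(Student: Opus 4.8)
The plan is to verify the three hypotheses of the Kuratowski--Zorn lemma for the ordered pair $(\mathcal{T},\prec)$ and then invoke it directly. First I would confirm that $(\mathcal{T},\prec)$ is a genuine partially ordered set. Reflexivity is immediate, since $u|_{[0,\tau)}=u$; antisymmetry holds because $u_1\prec u_2$ and $u_2\prec u_1$ force $\tau=\tau'$ and the restrictions to coincide, so $u_1=u_2$; transitivity follows by restricting restrictions, i.e. if $u_1\prec u_2\prec u_3$ on intervals $[0,\tau_1)\subset[0,\tau_2)\subset[0,\tau_3)$ then $u_3|_{[0,\tau_1)}=(u_3|_{[0,\tau_2)})|_{[0,\tau_1)}=u_2|_{[0,\tau_1)}=u_1$. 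Non-emptiness of $\mathcal{T}$ is already supplied by Proposition~\ref{Pro.2}, which produces, via the Banach fixed point theorem, at least one local mild solution of the approximated equation~(\ref{appr_ev}).

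The substantive step is to show that every chain in $\mathcal{T}$ admits an upper bound lying in $\mathcal{T}$. Given a chain $\mathcal{C}=\{u_\alpha:\alpha\in I\}$ with $u_\alpha$ defined on $[0,\tau_\alpha)$, I set $\tau^{*}:=\sup_{\alpha\in I}\tau_\alpha$ and define a candidate $u^{*}$ on $[0,\tau^{*})$ by gluing: for $t\in[0,\tau^{*})$ choose any $\alpha$ with $t<\tau_\alpha$ and put $u^{*}(t):=u_\alpha(t)$. The chain property guarantees that whenever $t<\tau_\alpha$ and $t<\tau_\beta$ the two solutions agree at $t$ (one being the restriction of the other), so $u^{*}$ is well defined independently of the chosen index.

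It then remains to verify that $u^{*}$ is itself a local mild solution, i.e. that it satisfies conditions (i) and (ii) in the definition of local mild solution on $[0,\tau^{*})$. Both conditions are local in time: for any fixed $t<\tau^{*}$ there is some $\alpha\in I$ with $t<\tau_\alpha$, and on $[0,t]$ the function $u^{*}$ coincides with $u_\alpha$. Hence $u^{*}|_{[0,t)}=u_\alpha|_{[0,t)}\in X_t$, which gives (i), and the mild identity
\[
u^{*}(t)=S(t)u_0+\int_0^{t}S(t-p)F(u^{*}(p))\,dp
\]
holds because it already holds for $u_\alpha$ and the integrand depends only on the values of the solution on $[0,t]$. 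Thus $u^{*}\in\mathcal{T}$, and by construction $u_\alpha\prec u^{*}$ for every $\alpha\in I$, so $u^{*}$ is an upper bound for $\mathcal{C}$.

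With all three hypotheses of the Kuratowski--Zorn lemma satisfied, $\mathcal{T}$ contains a maximal element, which is the desired maximal local solution. The main obstacle is precisely this gluing step: one must be careful that the supremum $\tau^{*}$ need not be attained, so $u^{*}$ is defined only on the half-open interval $[0,\tau^{*})$, and that the mild-solution identity survives the construction. It does so exactly because it is required only at each finite time $t<\tau^{*}$, where $u^{*}$ locally agrees with a genuine element of the chain, rather than uniformly up to the endpoint $\tau^{*}$.
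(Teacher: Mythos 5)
Your proposal is correct, and it actually supplies an argument the paper itself omits: Lemma~\ref{zorn} is stated with only a citation to the reference, so there is no in-paper proof to compare against. What you give is the standard Kuratowski--Zorn argument, and you put the weight in the right places: the verification that $\prec$ is a genuine partial order, non-emptiness via Proposition~\ref{Pro.2}, and above all the construction of an upper bound for a chain by gluing on $[0,\tau^{*})$ with $\tau^{*}=\sup_\alpha\tau_\alpha$, together with the observation that both defining conditions of a local mild solution are causal (they only involve values of the solution on $[0,t]$ for each fixed $t<\tau^{*}$), so they transfer from the chain elements to the glued function even though $\tau^{*}$ need not be attained. One small point worth adding for completeness: the solutions in $\mathcal{T}$ here are solutions of the truncated equation~(\ref{appr_ev}), whose integrand carries the extra factor $\theta_m\bigl(|u|_{X_p}\bigr)$; since $|u|_{X_p}$ depends only on the restriction of $u$ to $[0,p]$ with $p\leq t$, this factor is equally causal and your gluing argument goes through verbatim for that equation as well.
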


\begin{remark}
   A local maximal solution to the problem (\ref{appr_ev}) can be obtained as the consequence of lemma (\ref{zorn})
\end{remark}

\begin{proposition}{\label{proposition for unique local solution}}

\cite{hussain2015analysis} For a given $K>0$ there is a $T'(K)$ such that for each $u_{0} \in \mathcal{V}$ and $\|u_{0}\|_{\mathcal{V}} \leq K $, there is the unique local solution $u : [0,T') \longrightarrow \mathcal{V}$ to the problem (\ref{Ab_prb}).

    \end{proposition}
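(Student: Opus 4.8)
The plan is to obtain the local solution of the original abstract problem (\ref{Ab_prb}) by "removing the truncation" from the approximated equation (\ref{appr_ev}), whose unique local maximal solution is already guaranteed by Propositions (\ref{Pro.1}), (\ref{Pro.2}) and Lemma (\ref{zorn}). The point is that $\theta_n$ acts as the identity as long as the solution's $X_p$-norm stays below $n$, so on a sufficiently short time interval the truncated and untruncated problems coincide.

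First I would fix $K>0$ and take any $u_0\in\mathcal{V}$ with $\|u_0\|_{\mathcal{V}}\le K$. Choosing the truncation level appropriately — say $n=2c_2 K+1$ or some comparable explicit constant built from the embedding constants $c_1,c_2$ — I invoke the Remark following Proposition (\ref{Pro.2}) to get the unique local mild solution $u^{n}$ of (\ref{appr_ev}). The next step is the crucial one: I want to produce a deterministic time $T'(K)$, depending only on $K$ (through $n$, $c_1$, $c_2$ and the Lipschitz data $K_n$), on which
\begin{equation*}
  \theta_n\bigl(|u^{n}|_{X_p}\bigr)=1\qquad\text{for all } p\in[0,T'].
\end{equation*}
To do this I estimate $|u^{n}|_{X_t}$ directly from the fixed-point identity $u^{n}=\Phi(u^{n})$. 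Using Assumption part (ii) on the linear evolution term gives $|S(\cdot)u_0|_{X_t}\le c_2\|u_0\|_{\mathcal{V}}\le c_2 K$, and Assumption part (i) together with the global Lipschitz bound from Proposition (\ref{Pro.1}) controls the convolution term by $c_1 K_n\,T^{1/2}|u^{n}|_{X_T}$ plus a contribution from $F(0)$. A standard absorption argument then shows that for $T'$ small enough the quantity $|u^{n}|_{X_{T'}}$ is bounded by a constant strictly less than $n$, which forces the truncation factor to equal $1$.

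Once the truncation is neutralized on $[0,T')$, the integral equation (\ref{appr_ev}) reduces exactly to the mild formulation in the Definition, so $u\den u^{n}|_{[0,T')}$ is a local mild solution of (\ref{Ab_prb}) satisfying condition (i) (membership in $X_t$) and condition (ii) (the Duhamel identity). Uniqueness is inherited from the uniqueness of the fixed point in Proposition (\ref{Pro.2}): any two local solutions of (\ref{Ab_prb}) on $[0,T')$ stay below the threshold $n$ by the same estimate, hence both solve (\ref{appr_ev}), and the strict-contraction property forces them to coincide.

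The main obstacle I anticipate is making the self-referential norm estimate rigorous, since $\theta_n(|u^{n}|_{X_p})$ involves the very solution being bounded. The cleanest way around this is a continuity/bootstrap argument: the map $t\mapsto|u^{n}|_{X_t}$ is continuous and starts at $\|u_0\|_{\mathcal{V}}\le K<n$, so it remains below $n$ on a maximal subinterval, on which the equation is genuinely untruncated and the a priori bound above applies; choosing $T'(K)$ smaller than the time the bound would reach $n$ closes the loop. Care is also needed to confirm that $T'$ depends only on $K$ and not on the particular $u_0$, which follows because every estimate uses only $\|u_0\|_{\mathcal{V}}\le K$ and the fixed constants $c_1,c_2,K_n$.
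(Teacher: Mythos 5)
Your proposal is correct and follows exactly the route the paper intends: the paper states this proposition without proof (citing \cite{hussain2015analysis}), but the surrounding scaffolding --- the truncation $\theta_n$ of (\ref{truncted})--(\ref{tr_2}), the globally Lipschitz map $\xi$ of Proposition (\ref{Pro.1}), and the fixed point of Proposition (\ref{Pro.2}) --- is set up precisely for the de-truncation argument you describe, in which one chooses $n$ large relative to $c_2K$, shows by a continuity/bootstrap estimate that $|u^{n}|_{X_{T'}}$ stays below the truncation threshold for a $T'$ depending only on $K$, and concludes that the truncated and original problems coincide there. Your handling of the self-referential estimate and of uniqueness (both competing solutions fall below the threshold and hence solve the truncated equation, whose fixed point is unique) is the standard and correct completion of this argument.
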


\begin{proposition}{\label{proposition_for_global_solution}} 
\cite{hussain2015analysis} If $u$ is the local  mild solution of the abstract problem (\ref{Ab_prb}), where $t \in [0,T)$ and if there is a constant $K$ such that 
\begin{align*}
    \sup_{t \in [0,T)}{\|u(t)\|_{\mathcal{V}}} \leq K 
\end{align*}
then $t= \infty$. In other words, $u$ is the global solution.

\section{ Local maximal solution to the main problem} 

Before demonstrating the existence of a local maximal solution to the main problem (\ref{main_Prb}), we discuss some critical results in the following subsections.

\end{proposition}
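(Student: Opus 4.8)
The plan is to argue by contradiction via a continuation principle: the uniform-in-data local existence time supplied by Proposition \ref{proposition for unique local solution} lets me restart the solution just before the putative maximal time $T$ and extend past it, contradicting maximality. So suppose, contrary to the claim, that $T < \infty$. By hypothesis the trajectory stays in a fixed $\mathcal{V}$-ball, $\sup_{t \in [0,T)} \|u(t)\|_{\mathcal{V}} \leq K$.

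First I would extract a uniform local existence horizon. Applying Proposition \ref{proposition for unique local solution} with the bound $K$ produces a time $T'(K) > 0$, depending only on $K$ and not on the particular initial datum, such that every $w_0 \in \mathcal{V}$ with $\|w_0\|_{\mathcal{V}} \leq K$ launches a unique local mild solution on an interval of length $T'(K)$. The essential feature is that this horizon is uniform over the entire ball, which is precisely what creates room to overshoot $T$.

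Next I would perform the restart. Since $u$ is $\mathcal{V}$-valued and continuous into $\mathcal{V}$ (because $X_t \hookrightarrow C([0,t];\mathcal{V})$), I choose $t_0 \in [0,T)$ with $T - t_0 < T'(K)$. As $\|u(t_0)\|_{\mathcal{V}} \leq K$, Proposition \ref{proposition for unique local solution} yields a unique mild solution $v$ on $[t_0, t_0 + T'(K))$ with $v(t_0) = u(t_0)$, i.e. $v(t) = S(t - t_0) u(t_0) + \int_{t_0}^{t} S(t-p) F(v(p)) \, dp$. Using the semigroup identities $S(t-t_0)S(t_0-p) = S(t-p)$ and $S(t-t_0)S(t_0) = S(t)$, one checks that the restriction $u|_{[t_0,T)}$ satisfies this very fixed-point equation with the same datum $u(t_0)$; hence by the uniqueness from the Banach fixed-point step, $v \equiv u$ on the overlap $[t_0, T)$. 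Gluing $u|_{[0,t_0]}$ to $v$ then defines a mild solution $\tilde{u}$ on $[0, t_0 + T'(K))$ that agrees with $u$ on $[0,T)$ yet lives on a strictly longer interval, since $t_0 + T'(K) > T$.

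This contradicts the maximality of $u$ on $[0,T)$ in the ordering of Lemma \ref{zorn}, so the assumption $T < \infty$ is untenable, forcing $T = \infty$; that is, $u$ is global. I expect the main obstacle to be making the restart fully rigorous: one must confirm both that the horizon $T'(K)$ depends only on $K$ (not on the base time or datum), and that the restarted piece splices onto $u$ as a genuine mild solution on the full interval, which is where the Duhamel/semigroup algebra and the uniqueness statement carry the load.
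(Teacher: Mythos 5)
The paper does not actually prove this proposition: it is stated as a quoted result from \cite{hussain2015analysis} (the label and the text immediately after it even swallow the following section header), so there is no in-paper argument to compare yours against. On its own merits, your continuation argument is the standard and correct way to establish the blow-up alternative, and it uses exactly the ingredients the paper has set up: the uniform-in-data horizon $T'(K)$ from Proposition \ref{proposition for unique local solution}, the Duhamel splicing via $S(t-t_0)S(t_0-p)=S(t-p)$, and maximality in the order of Lemma \ref{zorn}. Two small points worth making explicit. First, the statement as written says ``local mild solution'' but the conclusion $T=\infty$ is only true for the \emph{maximal} local solution (a restriction of a global solution to $[0,1)$ is a bounded local solution with $T=1$); your proof silently and correctly reads ``maximal,'' but you should say so. Second, the uniqueness step you invoke to identify $v$ with $u$ on the overlap $[t_0,T)$ deserves one more sentence: Proposition \ref{proposition for unique local solution} gives uniqueness of the solution emanating from $u(t_0)$ within the fixed-point class on $[t_0,t_0+T'(K))$, and you must check that the translated restriction $u|_{[t_0,T)}$ lies in that class (i.e. in $X_{T-t_0}$ after the time shift), which follows from $u|_{[0,t)}\in X_t$ for all $t<T$. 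With those clarifications the proof is complete.
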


\begin{lemma}{\label{lemma_abt_liptz}}    
Let $\mathcal{E}$, $\mathcal{V}$ and $\mathcal{H}$ satisfy the assumption (\ref{Ass_2.2.2}), and $ F :  \mathcal{V}\longrightarrow \mathcal{H}$ be a map defined as 
\begin{eqnarray*}
    F(u)=\|  u\|^{2}_{{H}^{2}_{0}} u + 2\|u\|^{2}_{{H}^{1}_{0}} u  +\| u\|^{2n}_{{L}^{2n}} u- u^{2n-1} 
\end{eqnarray*}
Then $F$ is locally Lipschitz that is 
\begin{equation*}
    \|F(u_{1}-F(u_{2})\|_{\mathcal{H}} \leq \mathcal{G} ( \|u_{1}\|_{\mathcal{V}},\|u_{2}\|_{\mathcal{V}}) \|u_{1}-u_{2}\|_{\mathcal{V}}, \quad u_{1}, u_{2}\in \mathcal{V}
\end{equation*} 
 
where $ \mathcal{G}:   [0, \infty) \times [0, \infty) \longrightarrow [0, \infty)$ denotes a bounded linear polynomial map. That is, 
\begin{eqnarray*}
    \mathcal{G} \left(m,n\right) &=&2C \left(m^{2}+n^{2}+mn\right) \notag \\ &&+C_{n} \left[  \left( \frac{2n-1}{2}\right) \left( m^{2n-1}+n^{2n-1}\right) (m+n)  + \left( m^{2n}+n^{2n}\right) + \left( 1+m^{2}+n^{2}\right)^{\frac{1}{3}} \right]
\end{eqnarray*}
\end{lemma}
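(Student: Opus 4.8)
The plan is to exploit the fact that the desired estimate is linear in $(u_1-u_2)$ by splitting $F$ into four blocks,
\begin{align*}
F_1(u) &= \|u\|^2_{H^2_0}\,u, & F_2(u) &= 2\|u\|^2_{H^1_0}\,u,\\
F_3(u) &= \|u\|^{2n}_{L^{2n}}\,u, & F_4(u) &= -u^{2n-1},
\end{align*}
estimating $\|F_i(u_1)-F_i(u_2)\|_{\mathcal{H}}$ for each block separately and summing by the triangle inequality. Throughout I would use that the embeddings $\mathcal{E}\hookrightarrow\mathcal{V}\hookrightarrow\mathcal{H}$ of Assumption (\ref{Ass_2.2.2}) are continuous, so that $|u|_{\mathcal{H}}\le c\|u\|_{\mathcal{V}}$, and that $\|u\|_{H^2_0}\le\|u\|_{\mathcal{V}}$, $\|u\|_{H^1_0}\le\|u\|_{\mathcal{V}}$ directly from the definition of the $\mathcal{V}$-norm.

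For the three ``coefficient times $u$'' blocks $F_1,F_2,F_3$, each of the form $N(u)\,u$ with $N(u)$ a squared (resp.\ $2n$-th power) norm, I would insert the telescoping decomposition
\[
N(u_1)\,u_1 - N(u_2)\,u_2 = N(u_1)\,(u_1-u_2) + \bigl(N(u_1)-N(u_2)\bigr)\,u_2 .
\]
The first summand is controlled in $\mathcal{H}$ by $N(u_1)\,|u_1-u_2|_{\mathcal{H}}\le N(u_1)\,c\,\|u_1-u_2\|_{\mathcal{V}}$. For the second summand I would factor the difference of powers, using $a^2-b^2=(a-b)(a+b)$ for $F_1,F_2$ and $a^{2n}-b^{2n}=(a-b)\sum_{j=0}^{2n-1} a^j b^{2n-1-j}$ for $F_3$, with $a,b$ the relevant norms of $u_1,u_2$; the reverse triangle inequality $\bigl|\,\|u_1\|-\|u_2\|\,\bigr|\le\|u_1-u_2\|$ together with the same embeddings (including $\mathcal{V}\hookrightarrow L^{2n}(\mathcal{O})$ for $F_3$) then frees the factor $\|u_1-u_2\|_{\mathcal{V}}$. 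This produces exactly the quadratic contribution $2C(m^2+n^2+mn)$ coming from $F_1,F_2$ and the $(m^{2n}+n^{2n})$-type contribution coming from $F_3$.

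The genuinely nonlinear block $F_4(u)=-u^{2n-1}$ is where I expect the main obstacle. Here I would begin from the pointwise algebraic identity
\[
u_1^{2n-1}-u_2^{2n-1}=(u_1-u_2)\sum_{k=0}^{2n-2}u_1^{k}\,u_2^{2n-2-k},
\]
take the $\mathcal{H}=L^2(\mathcal{O})$ norm, and apply H\"older's inequality to split $(u_1-u_2)$ off from the polynomial factor. The delicate point is that bounding the remaining high-order $L^p$ norms of $u_1,u_2$ by the $\mathcal{V}$-norm is dimension-dependent and requires the Sobolev embeddings $\mathcal{V}\hookrightarrow L^p(\mathcal{O})$ for sufficiently large $p$; the fractional power $(1+m^2+n^2)^{1/3}$ appearing in $\mathcal{G}$ is the trace of a Gagliardo--Nirenberg interpolation invoked at this step, while the combinatorial factor $\bigl(\tfrac{2n-1}{2}\bigr)(m^{2n-1}+n^{2n-1})(m+n)$ records the H\"older bookkeeping over the $2n-1$ terms of the sum. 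Collecting the four blocks and absorbing constants into $C$ and $C_n$ yields the stated $\mathcal{G}$, which is bounded on bounded subsets of $[0,\infty)\times[0,\infty)$; hence $F$ is locally Lipschitz from $\mathcal{V}$ to $\mathcal{H}$, as claimed.
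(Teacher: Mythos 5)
Your proposal is correct and follows essentially the same route as the paper: the same four-block splitting $F=F_1+F_2+F_3+F_4$, the same telescoping $N(u_1)u_1-N(u_2)u_2=N(u_1)(u_1-u_2)+(N(u_1)-N(u_2))u_2$ with difference-of-powers factorization for the coefficient blocks, and the same H\"older-plus-Sobolev-embedding treatment of $u^{2n-1}$, with the dimension restriction correctly flagged. The only cosmetic difference is that for $F_4$ you obtain the pointwise bound $c\,(|a|^{2n-2}+|b|^{2n-2})\,|a-b|$ via the algebraic identity $a^{2n-1}-b^{2n-1}=(a-b)\sum_{k=0}^{2n-2}a^{k}b^{2n-2-k}$, whereas the paper derives it by a normalization/supremum argument for $\bigl||a|^{2n-2}a-|b|^{2n-2}b\bigr|$; the two are interchangeable, and the subsequent H\"older split in $L^{6}\times L^{6n-6}$ is identical (note only that in the paper the $\bigl(\tfrac{2n-1}{2}\bigr)(m^{2n-1}+n^{2n-1})(m+n)$ term of $\mathcal{G}$ arises from $F_3$ rather than from the $F_4$ bookkeeping, which is immaterial since $\mathcal{G}$ need only dominate).
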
 

\begin{proof}
Let $  F(u)=\|  u\|^{2}_{{H}^{2}_{0}} u + 2\|    u\|^{2}_{{H}^{1}_{0}} u  +\| u\|^{2n}_{{L}^{2n}} u- u^{2n-1} = \sum _{i=1}^{4} F_{i}(u)$, i.e. set\\
\begin{equation*}
F_{1}(u) := \|  u\|^{2}_{{H}^{2}_{0}} u, \quad F_{2}(u) :=  2\|    u\|^{2}_{{H}^{1}_{0}} u,\quad   F_{3}(u) := \| u\|^{2n}_{{L}^{2n}} u,\quad F_{4}(u) := - u^{2n-1}.
\end{equation*}

 We consider each $F_{i}'s$ in the estimation. 
 Let us begin with $F_{1}$, assume the $u_{1}$ and $u_{2}$ are fixed elements in $\mathcal{V}$ then 
 \begin{eqnarray*}{}
     \left|F_{1}(u_{1})-F_{1}(u_{2})\right|_{\mathcal{H}} &=& \left| \|  u_{1}\|^{2}_{{H}^{2}_{0}} u_{1} - \|  u_{2}\|^{2}_{{H}^{2}_{0}} u_{2} \right|_{\mathcal{H}} \notag \\  
&=& \left| \|  u_{1}\|^{2}_{{H}^{2}_{0}} u_{1} -\|  u_{1}\|^{2}_{{H}^{2}_{0}} u_{2}+ \|  u_{1}\|^{2}_{{H}^{2}_{0}} u_{2} - \|  u_{2}\|^{2}_{{H}^{2}_{0}} u_{2} \right|_{\mathcal{H}} \notag \\  
 &\leq& \|  u_{1}\|^{2}_{{H}^{2}_{0}} \left|u_{1}-u_{2}\right|_{\mathcal{H}} + \left( \|  u_{1}\|^{2}_{{H}^{2}_{0}} -\|  u_{2}\|^{2}_{{H}^{2}_{0}} \right)\left|u_{2}\right|_{\mathcal{H}}  \notag \\
     &\leq&  \left(\|  u_{1}\|^{2}_{{H}^{2}_{0}} + \|  u_{2}\|^{2}_{{H}^{2}_{0}}\right) \left|u_{1}-u_{2}\right|_{\mathcal{H}} \\ && + \left( \|  u_{1}\|_{{H}^{2}_{0}} +\|  u_{2}\|_{{H}^{2}_{0}} \right) \|u_{1}-u_{2}\|_{{H}^{2}_{0}} \left(\left|u_{1}\right|_{\mathcal{H}}+\left|u_{2}\right|_{\mathcal{H}}\right)  \notag \\
     \end{eqnarray*}
By using $ {H}^{2}_{0} \hookrightarrow \mathcal{H}$ is continuous,  there is a constant  $C_{1}>0$ such that  $ |u|_{\mathcal{H}} \leq C _{1}\|u\|_{{H}^{2}_{0}}$,  and it follows that
\begin{eqnarray*}
     \left|F_{1}(u_{1})-F_{1}(u_{2})\right|_{\mathcal{H}} &\leq& C_{1} \left( \left(\|  u_{1}\|^{2}_{{H}^{2}_{0}} + \|  u_{2}\|^{2}_{{H}^{2}_{0}}\right)   + \left( \|  u_{1}\|_{{H}^{2}_{0}} +\|  u_{2}\|_{{H}^{2}_{0}} \right)^{2}\right) \|u_{1}-u_{2}\|_{{H}^{2}_{0}} 
\end{eqnarray*}
In addition, by using  $ \mathcal{V} \hookrightarrow {H}^{2}_{0} $ is continuous,  there is a constant  $C_{2}>0$ such that  $ |u|_{{H}^{2}_{0} } \leq C_{2} \|u\|_{\mathcal{V}}$,  and it follows that
\begin{eqnarray*}
     \left|F_{1}(u_{1})-F_{1}(u_{2})\right|_{\mathcal{H}} &\leq& C_{1} C_{2}^{3}\left( \left(\|  u_{1}\|^{2}_{\mathcal{V}} + \|  u_{2}\|^{2}_{\mathcal{V}}\right)   + \left( \|  u_{1}\|_{\mathcal{V}} +\|  u_{2}\|_{\mathcal{V}} \right)^{2}\right) \|u_{1}-u_{2}\|_{\mathcal{V}} \notag \\ 
     &=& 2C_{1} C_{2}^{3}\left( \|  u_{1}\|^{2}_{\mathcal{V}} + \|  u_{2}\|^{2}_{\mathcal{V}}   +  \|  u_{1}\|_{\mathcal{V}} \|  u_{2}\|_{\mathcal{V}} \right) \|u_{1}-u_{2}\|_{\mathcal{V}}
\end{eqnarray*}
Therefore, it was found that:

\begin{eqnarray}{\label{F_{1}}}
    \left|F_{1}(u_{1})-F_{1}(u_{2})\right|_{\mathcal{H}} &\leq& 2C_{1} C_{2}^{3}\left( \|  u_{1}\|^{2}_{\mathcal{V}} + \|  u_{2}\|^{2}_{\mathcal{V}}   +  \|  u_{1}\|_{\mathcal{V}} \|  u_{2}\|_{\mathcal{V}} \right) \|u_{1}-u_{2}\|_{\mathcal{V}}
\end{eqnarray}

Now, consider function $F_{2}$. Assume that $u_{1}$ and $u_{2}$  are two fixed element in $\mathcal{V}$

\begin{eqnarray*}
     \left|F_{2}(u_{1})-F_{2}(u_{2})\right|_{\mathcal{H}} &=& 2 \left| \|  u_{1}\|^{2}_{{H}^{1}_{0}} u_{1} - \|  u_{2}\|^{2}_{{H}^{1}_{0}} u_{2} \right|_{\mathcal{H}} \notag \\  
&=& 2\left| \|  u_{1}\|^{2}_{{H}^{1}_{0}} u_{1} -\|  u_{1}\|^{2}_{{H}^{1}_{0}} u_{2}+ \|  u_{1}\|^{2}_{{H}^{1}_{0}} u_{2} - \|  u_{2}\|^{2}_{{H}^{1}_{0}} u_{2} \right|_{\mathcal{H}} \notag \\  
 &\leq& 2\|  u_{1}\|^{2}_{{H}^{1}_{0}} \left|u_{1}-u_{2}\right|_{\mathcal{H}} + \left( \|  u_{1}\|^{2}_{{H}^{1}_{0}} -\|  u_{2}\|^{2}_{{H}^{1}_{0}} \right)\left|u_{2}\right|_{\mathcal{H}}  \notag \\
     &\leq& 2 \left(\|  u_{1}\|^{2}_{{H}^{2}_{0}} + \|  u_{2}\|^{2}_{{H}^{1}_{0}}\right) \left|u_{1}-u_{2}\right|_{\mathcal{H}} \\ && + \left( \|  u_{1}\|_{{H}^{1}_{0}} +\|  u_{2}\|_{{H}^{1}_{0}} \right) \|u_{1}-u_{2}\|_{{H}^{1}_{0}} \left(\left|u_{1}\right|_{\mathcal{H}}+\left|u_{2}\right|_{\mathcal{H}}\right)  \notag \\
     \end{eqnarray*}
By using $ {H}^{1}_{0} \hookrightarrow \mathcal{H}$ is continuous,  there is a constant  $C_{3}>0$ such that  $ |u|_{\mathcal{H}} \leq C _{3}\|u\|_{{H}^{1}_{0}}$,  and it follows that
\begin{eqnarray*}
     \left|F_{2}(u_{1})-F_{2}(u_{2})\right|_{\mathcal{H}} &\leq& 2C_{3} \left( \left(\|  u_{1}\|^{2}_{{H}^{1}_{0}} + \|  u_{2}\|^{2}_{{H}^{1}_{0}}\right)   + \left( \|  u_{1}\|_{{H}^{1}_{0}} +\|  u_{2}\|_{{H}^{1}_{0}} \right)^{2}\right) \|u_{1}-u_{2}\|_{{H}^{1}_{0}} 
\end{eqnarray*}

In addition, by using $ \mathcal{V} \hookrightarrow {H}^{1}_{0} $ is continuous,  there is a constant  $C_{4}>0$ such that  $ |u|_{{H}^{1}_{0} } \leq C_{4} \|u\|_{\mathcal{V}}$,  and it follows that

\begin{eqnarray*}
     \left|F_{2}(u_{1})-F_{2}(u_{2})\right|_{\mathcal{H}} &\leq& 2C_{3} C_{4}^{3}\left( \left(\|  u_{1}\|^{2}_{\mathcal{V}} + \|  u_{2}\|^{2}_{\mathcal{V}}\right)   + \left( \|  u_{1}\|_{\mathcal{V}} +\|  u_{2}\|_{\mathcal{V}} \right)^{2}\right) \|u_{1}-u_{2}\|_{\mathcal{V}} \notag \\ 
     &=& 4C_{3} C_{4}^{3}\left( \|  u_{1}\|^{2}_{\mathcal{V}} + \|  u_{2}\|^{2}_{\mathcal{V}}   +  \|  u_{1}\|_{\mathcal{V}} \|  u_{2}\|_{\mathcal{V}} \right) \|u_{1}-u_{2}\|_{\mathcal{V}}.
\end{eqnarray*}
Therefore, it is found that:
\begin{eqnarray}{\label{F_{2}}}
    \left|F_{2}(u_{1})-F_{2}(u_{2})\right|_{\mathcal{H}} &\leq& 4C_{3} C_{4}^{3}\left( \|  u_{1}\|^{2}_{\mathcal{V}} + \|  u_{2}\|^{2}_{\mathcal{V}}   +  \|  u_{1}\|_{\mathcal{V}} \|  u_{2}\|_{\mathcal{V}} \right) \|u_{1}-u_{2}\|_{\mathcal{V}}
\end{eqnarray}
Consider the function $F_{3}(u) = \| u\|^{2n}_{{L}^{2n}} u$,\\
$  \left|F_{3}(u_{1})-F_{3}(u_{2})\right|_{\mathcal{H}} $
  \begin{eqnarray*}
   &=&  \left| \|  u_{1}\|^{2n}_{{L}^{2n}} u_{1} - \|  u_{2}\|^{2n}_{{L}^{2n}} u_{2} \right|_{\mathcal{H}} \notag \\  
&=& \left| \|  u_{1}\|^{2n}_{{L}^{2n}} u_{1} -\|  u_{1}\|^{2n}_{{L}^{2n}} u_{2}+ \|  u_{1}\|^{2n}_{{L}^{2n}} u_{2} - \|  u_{2}\|^{2n}_{{L}^{2n}} u_{2} \right|_{\mathcal{H}} \notag \\   
 &\leq&  \left( \|  u_{1}\|^{2n}_{{L}^{2n}} -\|  u_{2}\|^{2n}_{{L}^{2n}} \right)\left|u_{1}\right|_{\mathcal{H}} + \|  u_{2}\|^{2n}_{{L}^{2n}} \left|u_{1}-u_{2}\right|_{\mathcal{H}}  \notag \\
     &\leq&  \left( \frac{2n-1}{2}\right) \left( \|  u_{1}\|^{2n-1}_{{L}^{2n}} -\|  u_{2}\|^{2n-1}_{{L}^{2n}} \right)\left|u_{1}\right|_{\mathcal{H}} + \|  u_{2}\|^{2n}_{{L}^{2n}} \left|u_{1}-u_{2}\right|_{\mathcal{H}}  \notag
     \end{eqnarray*}

As we know that; $ \mathcal{V} \hookrightarrow {L}^{2n-1} $, $ \mathcal{V} \hookrightarrow \mathcal{H} $ and $ \mathcal{V} \hookrightarrow {L}^{2n} $ are continuous , it follows that:\\
$ \left|F_{3}(u_{1})-F_{3}(u_{2})\right|_{\mathcal{H}}$
\begin{eqnarray*} 
&\leq& C^{n+1} \left( \frac{2n-1}{2}\right) \left( \|  u_{1}\|^{2n-1}_{\mathcal{V}} -\|  u_{2}\|^{2n-1}_{\mathcal{V}} \right)\|u_{1}\|_{\mathcal{V}} + C^{n+1}\|  u_{2}\|^{2n}_{\mathcal{V}} \|u_{1}-u_{2}\|_{\mathcal{V}} \\ 
 &\leq& C^{n+1} \left( \frac{2n-1}{2}\right) \left( \|  u_{1}\|^{2n-1}_{\mathcal{V}} +\|  u_{2}\|^{2n-1}_{\mathcal{V}} \right)\left(\|u_{1}\|_{\mathcal{V}} 
 +  \|u_{2}\|_{\mathcal{V}}\right)\\
 &\leq& C^{n+1} \left( \frac{2n-1}{2}\right)\left(\|u_{1}\|^{2n}_{\mathcal{V}} + \|  u_{2}\|^{2n}_{\mathcal{V}}\right) \|u_{1}-u_{2}\|_{\mathcal{V}}.  
\end{eqnarray*} 

We have:
\begin{eqnarray}{\label{F_{3}}}
    \left|F_{3}(u_{1})-F_{3}(u_{2})\right|_{\mathcal{H}}&\leq& C^{n+1} \left( \frac{2n-1}{2}\right)
 \left(\|u_{1}\|^{2n}_{\mathcal{V}} + \|  u_{2}\|^{2n}_{\mathcal{V}}\right) \|u_{1}-u_{2}\|_{\mathcal{V}}.
\end{eqnarray}

Finally, to prove the estimation for $F_{4}(u)=-u^{2n-1}$, the following inequality is required:

\begin{align}{\label{inq}}
\left| |u_{1}|_{\mathcal{H}}^{2n-2} u_{1} - |u_{2}|_{\mathcal{H}}^{2n-2} u_{2} \right|_{\mathcal{H}} \leq c \left( |u_{1}|_{\mathcal{H}}^{2n-2} + |u_{2}|_{\mathcal{H}}^{2n-2} \right) |u_{1} - u_{2}|_{\mathcal{H}}
\end{align}

We take steps to prove the inequality (\ref{inq}). First, we suppose that $|u_{1}|_{\mathcal{H}}, ~|u_{2}|_{\mathcal{H}} \leq 1$ then, the differentiation at zero of the function $u \rightarrow |u|^{p} $, where $p>1$, or one-sided differentiability at zero for $ p = 1$ gives the following equation:

\begin{align}{\label{sup}}
    \sup_{\substack{u_{1} \neq u_{2}, \\ |u_{1}|_{\mathcal{H}}, |u_{2}|_{\mathcal{H}} \leq 1}} \frac{\left| |u_{1}|_{\mathcal{H}}^{2n-2} u_{1} - |u_{2}|_{\mathcal{H}}^{2n-2} u_{2}\right|_{\mathcal{H}}}{|u_{1} - u_{2}|_{\mathcal{H}}} =: C_0 < \infty
\end{align}

For the case  $|u_{1}|_{\mathcal{H}} ~\text{ or } ~|u_{2}|_{\mathcal{H}} > 1$, we can continue as follows:

\begin{align*}
     \frac{\left| |u_{1}|_{\mathcal{H}}^{2n-2} u_{1} - |u_{2}|_{\mathcal{H}}^{2n-2} u_{2}\right|_{\mathcal{H}}}{|u_{1} - u_{2}|_{\mathcal{H}}} = 
\left(L_{1}+L_{2}\right)^{2n-2} \frac{\left| \left|u'_{1}\right|_{\mathcal{H}}^{2n-2} u'_{1} - \left|u'_{2} \right|_{\mathcal{H}}^{2n-2} u'_{2}\right|_{\mathcal{H}}}{ \left|u'_{1} - u'_{2}\right|_{\mathcal{H}}}
\end{align*}

Where $L_{1}=|u_{1}|_{\mathcal{H}}$, $L_{2}=|u_{2}|_{\mathcal{H}}$, $u'_{1}= \frac{u_{1}}{L_{1}+L_{2}}$,  and $u'_{2}= \frac{u_{2}}{L_{1}+L_{2}}$.

Using (\ref{sup}), it follows that

\begin{align*}
     \frac{\left| |u_{1}|_{\mathcal{H}}^{2n-2} u_{1} - |u_{2}|_{\mathcal{H}}^{2n-2} u_{2}\right|_{\mathcal{H}}}{|u_{1} - u_{2}|_{\mathcal{H}}} &\leq C_{0}
\left(L_{1}+L_{2}\right)^{2n-2} = C_{0}
\left(|u_{1}|_{\mathcal{H}}+|u_{2}|_{\mathcal{H}}\right)^{2n-2}
\leq C_{n}
\left(|u_{1}|_{\mathcal{H}}^{2n-2}+|u_{2}|_{\mathcal{H}}^{2n-2}\right)
\end{align*}

Therefore, we used the claim in (\ref{inq}). Our final estimation can be deduced using the Hölder inequality as follows:

\begin{eqnarray*}
     \left|F_{4}(u_{1})-F_{4}(u_{2})\right|_{\mathcal{H}}^{2} &=& \left|-u_{1}^{2n-1}+u_{2}^{2n-1}\right|_{\mathcal{H}}^{2} \leq  \left|u_{1}^{2n-2}-u_{2}^{2n-2}\right|_{\mathcal{H}}^{2} \left| u_{1}-u_{2}\right|_{\mathcal{H}}^{2}\notag \\
 &\leq& C^{2}_{n} \int_{\mathcal{O}} \left| u_{1}(s) - u_{2}(s) \right|_{\mathcal{H}}^{2} \left( \left| u_{1}(s) \right|_{\mathcal{H}}^{2n-2} + \left| u_{2}(s) \right|_{\mathcal{H}}^{2n-2} \right)^{2} ~ds \\
&\leq& C^{2}_{n} \left( \int_{\mathcal{O}} \left| u_{1}(s) - u_{2}(s) \right|_{\mathcal{H}}^{6} ~ds \right)^{\frac{1}{3}} 
\left( \int_{\mathcal{O}} \left( \left| u_{1}(s) \right|_{\mathcal{H}}^{2n-2} + \left| u_{2}(s) \right|_{\mathcal{H}}^{2n-2} \right)^{3} ~ds \right)^{\frac{2}{3}} \notag \\
\end{eqnarray*}

The following inequality can be obtained using the Minkowski inequality:

\begin{align}
  \left|F_{4}(u_{1})-F_{4}(u_{2})\right|_{\mathcal{H}} &\leq C_{n}\| u_{1} - u_{2} \|_{L^{6}(\mathcal{O})}
     \left( \left( \int_{\mathcal{O}} \left|u_{1}(s)\right|_{\mathcal{H}}^{6n-6} ~ds \right)^{\frac{1}{3}} + \left( \int_{\mathcal{O}} \left|u_{2}(s)\right|_{\mathcal{H}}^{6n-6} ~ds \right)^{\frac{1}{3}} \right) 
\end{align}

We know that:

\begin{align*}
    \left|u_{}(s)\right|_{\mathcal{H}}^{6n-6} \leq \max \{1, |u(s)|_{\mathcal{H}}\}^{6n-6}
\end{align*}

Therefore,

\begin{align*}
    \left( \int_{\mathcal{O}} \left|u_{}(s)\right|_{\mathcal{H}}^{6n-6} ~ds \right)^{\frac{1}{3}} \leq \begin{cases}
\left( |\mathcal{O}| + |u(s)|_{\mathcal{L}^{6n}(\mathcal{O})}^2 \right)^{1/3}, & d = 2 \\
\left( |\mathcal{O}| + |u(s)|_{\mathcal{L}^{6}(\mathcal{O})}^2 \right)^{1/3}, & d = 3
\end{cases}
\end{align*}

Hence, for $d=2$ and $d=3$ using the continuity of embedding $\mathcal{V} \hookrightarrow \mathcal{L}^{6n}(\mathcal{O})$ and  $\mathcal{V} \hookrightarrow \mathcal{L}^{6}(\mathcal{O})$ respectively, we can deduce that there is a constant $K_{n}$ such that

\begin{align}{\label{f-4}}
    \left|F_{4}(u_{1})-F_{4}(u_{2})\right|_{\mathcal{H}}   &\leq   
   K_{n} 
     \left( 1+\|u_{1}\|_{\mathcal{V}}^{2} + \|u_{2}\|_{\mathcal{V}}^{2} \right)^{\frac{1}{3}} \| u_{1} - u_{2} \|_{\mathcal{V}}
\end{align}

From (\ref{F_{1}}), (\ref{F_{2}}), (\ref{F_{3}}), and (\ref{f-4}) it follows that

 \begin{eqnarray*}
&& \left|F(u_{1})-F(u_{2})\right|_{\mathcal{H}}  
 \leq  \left|F_{1}(u_{1})-F_{1}(u_{2})\right|_{\mathcal{H}} + \left|F_{2}(u_{1})-F_{2}(u_{2})\right|_{\mathcal{H}} +\left|F_{3}(u_{1})-F_{3}(u_{2})\right|_{\mathcal{H}} 
    \\ && +\left|F_{4}(u_{1})-F_{4}(u_{2})\right|_{\mathcal{H}}  \\  
      & \leq& 2 C \left( \|  u_{1}\|^{2}_{\mathcal{V}} + \|  u_{2}\|^{2}_{\mathcal{V}}   +  \|  u_{1}\|_{\mathcal{V}} \|  u_{2}\|_{\mathcal{V}} \right)+ C_{n}\begin{bmatrix}
\left( \frac{2n-1}{2}\right) \left( \|  u_{1}\|^{2n-1}_{\mathcal{V}} +\|  u_{2}\|^{2n-1}_{\mathcal{V}} \right)\left(\|u_{1}\|_{\mathcal{V}}  
 +  \|u_{2}\|_{\mathcal{V}}\right) \\+ \left(\|  u_{1}\|^{2n}_{\mathcal{V}} + \|  u_{2}\|^{2n}_{\mathcal{V}}  \right)+ \left(  1+\|u_{1}\|_{\mathcal{V}}^{2} +    \|u_{2}\|_{\mathcal{V}}^{2}   \right)^{\frac{1}{3}}
\end{bmatrix}\\ &\times&
\|u_{1}-u_{2}\|_{\mathcal{V}}
 \end{eqnarray*}

Where $C= C_{1}C_{2}^{3}+2C_{3} C_{4}^{3}$ and $ C_{n} = \max\{K_{n}, C^{n+1}\} $  

From the above inequality, it follows that $F$ is locally Lipschitz and 

\begin{equation}
\left|F\left(u_1\right)-F\left(u_2\right)\right|_{\mathcal{H}} \leq \mathcal{G}\left(\left\|u_1\right\|_{\mathcal{V}},\left\|u_2\right\|_{\mathcal{V}}\right)\left\|u_1-u_2\right\|_{\mathcal{V}}, \quad u_1, u_2 \in \mathcal{V}
\end{equation}

\end{proof}

\begin{lemma}{\label{Self-Ad_op}} 
    The operator $A: D(A) \longrightarrow \mathcal{V}$ defined by 
\begin{align*}
    A &= \Delta^{2}-2\Delta 
\end{align*}
 is the self-adjoint operator.\\
\begin{proof}
    Suppose $u_{1}$ and $u_{2}$ are the elements in $D(A)$ then 
\begin{align*}
\langle Au_{1}, u_{2}  \rangle 
        &= \langle \Delta^{2} u_{1}-2\Delta u_{1} , u_{2}  \rangle = \langle \Delta^{2}u_{1} , u_{2}  \rangle - 2 \langle \Delta u_{1} , u_{2}  \rangle \\
        &=  \langle \Delta u_{1} , \Delta u_{2}  \rangle + 2 \langle \nabla u_{1} , \nabla u_{2}  \rangle =\langle  u_{1} , \Delta ^{2} u_{2}  \rangle - 2 \langle  u_{1} , \Delta u_{2}  \rangle \\&= \langle  u_{1} , \left(\Delta ^{2} -2 \Delta \right)u_{2} \rangle  = \langle u_{1}, Au_{2}  \rangle
\end{align*}  

Thus, operator $A$ is the self-adjoint operator.
\end{proof}

\end{lemma}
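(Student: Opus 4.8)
The plan is to establish self-adjointness in two stages: first prove that $A$ is symmetric on $D(A)$ by repeated integration by parts, and then upgrade symmetry to genuine self-adjointness (i.e. $D(A^{*}) = D(A)$) using the coercive bilinear form naturally associated with $A$. Merely verifying $\langle Au_{1},u_{2}\rangle = \langle u_{1},Au_{2}\rangle$ only gives that $A$ is symmetric, so the identification of the adjoint domain is the step that actually earns the word ``self-adjoint''.

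For the symmetry computation I would fix $u_{1},u_{2}\in D(A) = H^{1}_{0}(\mathcal{O})\cap H^{2}(\mathcal{O})\cap H^{4}(\mathcal{O})$ and split $\langle Au_{1},u_{2}\rangle = \langle \Delta^{2}u_{1},u_{2}\rangle - 2\langle \Delta u_{1},u_{2}\rangle$. For the second-order part, Green's first identity gives $\langle -\Delta u_{1},u_{2}\rangle = \langle \nabla u_{1},\nabla u_{2}\rangle$, the boundary integral $\int_{\partial\mathcal{O}}(\partial_{\nu}u_{1})\,u_{2}$ vanishing because $u_{2}\in H^{1}_{0}(\mathcal{O})$; a second application moves the Laplacian onto $u_{2}$. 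For the biharmonic part I would integrate by parts twice, passing through the symmetric midpoint $\langle \Delta u_{1},\Delta u_{2}\rangle$ and then onto $\langle u_{1},\Delta^{2}u_{2}\rangle$. Adding the two contributions yields $\langle Au_{1},u_{2}\rangle = \langle u_{1},Au_{2}\rangle$.

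To promote this to self-adjointness I would introduce the bilinear form $a(u,v) = \langle \Delta u,\Delta v\rangle + 2\langle \nabla u,\nabla v\rangle$ on $\mathcal{V} = H^{1}_{0}(\mathcal{O})\cap H^{2}(\mathcal{O})$, which is symmetric, continuous, and coercive (the biharmonic term controls the $H^{2}$-seminorm while the gradient term is nonnegative). By Kato's first representation theorem the self-adjoint operator associated with $a$ has as its domain those $u\in\mathcal{V}$ for which $v\mapsto a(u,v)$ extends to an $\mathcal{H}$-continuous functional, and it acts as $\Delta^{2}-2\Delta$ in the distributional sense. Elliptic regularity for the biharmonic problem on the smooth domain $\mathcal{O}$ then identifies this domain with $H^{1}_{0}(\mathcal{O})\cap H^{2}(\mathcal{O})\cap H^{4}(\mathcal{O}) = D(A)$, so the form operator coincides with $A$ and $A$ is self-adjoint.

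The main obstacle I anticipate is the boundary analysis in the biharmonic integration by parts: that term produces two boundary integrals, $\int_{\partial\mathcal{O}}(\partial_{\nu}\Delta u_{1})\,u_{2}$ and $\int_{\partial\mathcal{O}}(\Delta u_{1})(\partial_{\nu}u_{2})$, and while the first vanishes from $u_{2}\in H^{1}_{0}(\mathcal{O})$, the second does not follow from the $H^{1}_{0}\cap H^{2}$ condition alone, since $\partial_{\nu}u_{2}$ need not vanish. Pinning down exactly which boundary condition is encoded in $D(A)$ (either $\partial_{\nu}u=0$, the clamped case, or $\Delta u = 0$ on $\partial\mathcal{O}$, the Navier case) — and hence which self-adjoint realization of $\Delta^{2}-2\Delta$ is meant — is the delicate point that the purely formal computation suppresses, and it is precisely what the form-theoretic route is designed to make rigorous.
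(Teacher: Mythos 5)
Your proposal is correct and in fact strictly more careful than what the paper does. The paper's entire proof is the formal chain $\langle Au_{1},u_{2}\rangle=\langle\Delta u_{1},\Delta u_{2}\rangle+2\langle\nabla u_{1},\nabla u_{2}\rangle=\langle u_{1},Au_{2}\rangle$, i.e.\ exactly your first stage, after which it declares $A$ self-adjoint; it never addresses $D(A^{*})=D(A)$, so strictly speaking it only proves symmetry. Your second stage (the form $a(u,v)=\langle\Delta u,\Delta v\rangle+2\langle\nabla u,\nabla v\rangle$, Kato's representation theorem, plus biharmonic elliptic regularity to identify the form domain with $D(A)$) is the standard and correct way to close that gap, and is absent from the paper. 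Your boundary-term worry is also well founded and points at a real defect in the paper's argument: with $D(A)=H^{1}_{0}(\mathcal{O})\cap H^{2}(\mathcal{O})\cap H^{4}(\mathcal{O})$ the only boundary condition encoded is $u=0$, so in the step $\langle\Delta^{2}u_{1},u_{2}\rangle=\langle\Delta u_{1},\Delta u_{2}\rangle$ the boundary integral $\int_{\partial\mathcal{O}}(\Delta u_{1})(\partial_{\nu}u_{2})$ has no reason to vanish; a fourth-order operator needs a second boundary condition (clamped $\partial_{\nu}u=0$ or Navier $\Delta u=0$) for either the symmetry computation or the regularity identification to go through. So the comparison is: the paper buys brevity at the cost of suppressing both the adjoint-domain issue and the boundary conditions, while your route makes the result actually rigorous once the intended boundary conditions are pinned down --- which is the one point you would still have to settle before writing the final proof.
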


\begin{theorem}

Let  $\mathcal{E} \subset\mathcal{V} \subset \mathcal{H}$ satisfy the assumption (\ref{Ass_2.2.2}) and for any given $K>0$ there is $T'(K)$ such that $\|u_{0}\| \leq K$, where $u_{0} \in \mathcal{V}$, there is the unique local solution $ u : [0, T') \longrightarrow \mathcal{V}$ to the problem

\begin{align}{\label{PB}}
        \frac{\partial u}{\partial t} &=\pi _{u}(-\Delta^{2}u+2\Delta u +au + u^{2n-1}) = -Au + F(u(t)) \notag\\ 
u(0) &=u_{0}.  
    \end{align}
Where   ~~~~   $ F(u)=\|  u\|^{2}_{{H}^{2}_{0}} ~u + 2\|    u\|^{2}_{{H}^{1}_{0}} ~u  +\| u\|^{2n}_{{L}^{2n}} u- u^{2n-1}  $  ~~~    and  ~~~    $ A = \Delta^{2}-2\Delta$
\end{theorem}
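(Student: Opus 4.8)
The plan is to recognize problem (\ref{PB}) as a concrete instance of the abstract parabolic problem (\ref{Ab_prb}) and then to invoke the abstract local well-posedness result of Proposition \ref{proposition for unique local solution}. The first task is to justify the identity asserted in the statement, namely that
$$\pi_{u}(-\Delta^{2}u+2\Delta u -au - u^{2n-1}) = -Au + F(u),$$
with $A=\Delta^{2}-2\Delta$ and $F$ as displayed. This is exactly the computation already carried out in (\ref{Projection_U}): expanding $\pi_{u}(h)=h-\langle h,u\rangle u$ and using $\langle \Delta^{2}u,u\rangle = \|u\|_{H^{2}_{0}}^{2}$, $\langle \Delta u,u\rangle = -\|u\|_{H^{1}_{0}}^{2}$, $|u|_{\mathcal{H}}^{2}=1$ on $\mathcal{M}$, and $\langle u^{2n-1},u\rangle=\|u\|_{L^{2n}}^{2n}$, the two linear terms collapse into $-Au$, the $\pm au$ contributions cancel, and the remaining terms assemble into $F(u)$. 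Once this identification is made, (\ref{PB}) is literally (\ref{Ab_prb}) for this particular pair $(A,F)$.

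Next I would verify that the two structural hypotheses required by the abstract theory hold for this pair. Self-adjointness of $A=\Delta^{2}-2\Delta$ on $D(A)=\mathcal{E}$ is supplied by Lemma \ref{Self-Ad_op}, which (together with the standing Assumption) guarantees that $-A$ generates the analytic semigroup $\{S(t)\}_{t\ge 0}$ and that the maximal-regularity estimates with constants $c_{1},c_{2}$ are available. The local Lipschitz property of $F:\mathcal{V}\to\mathcal{H}$ is supplied by Lemma \ref{lemma_abt_liptz}, which yields
$$|F(u_{1})-F(u_{2})|_{\mathcal{H}} \le \mathcal{G}(\|u_{1}\|_{\mathcal{V}},\|u_{2}\|_{\mathcal{V}})\,\|u_{1}-u_{2}\|_{\mathcal{V}}$$
with the explicit polynomial $\mathcal{G}$. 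It then remains to observe that $\mathcal{G}$ meets the hypotheses of Proposition \ref{Pro.1}: by inspection $\mathcal{G}(m,n)=\mathcal{G}(n,m)$, so the symmetric condition holds, and since $\mathcal{G}$ is a polynomial it is bounded on every square $[0,L]\times[0,L]$, giving $|\mathcal{G}(m,n)|\le C_{L}$.

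With these verifications the abstract machinery applies verbatim. Proposition \ref{Pro.1} shows that the truncated map $\xi[u]=\theta_{m}(|u|_{X_{p}})F(u)$ is globally Lipschitz from $X_{T}$ into $L^{2}(0,T;\mathcal{E})$; Proposition \ref{Pro.2} then makes $\Phi(u)=Su_{0}+S*\xi(u)$ a strict contraction on $X_{T_{1}}$ for $T_{1}$ small, producing by the Banach fixed point theorem a unique local mild solution $u^{n}$ of the truncated equation (\ref{appr_ev}), which Lemma \ref{zorn} extends to a maximal element. Finally, for initial data with $\|u_{0}\|_{\mathcal{V}}\le K$ one chooses the truncation level and the time $T'(K)$ as in Proposition \ref{proposition for unique local solution} so that on $[0,T')$ the cut-off $\theta_{m}$ is identically $1$ and the truncated solution coincides with a genuine local solution of (\ref{PB}); uniqueness on $[0,T')$ follows from the local Lipschitz estimate via a standard Gr\"onwall argument.

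I expect no genuinely hard analytic obstacle here, since all the substantive difficulty has been absorbed into Lemmas \ref{lemma_abt_liptz} and \ref{Self-Ad_op} and the abstract Propositions \ref{Pro.1}--\ref{proposition for unique local solution}. The only points requiring care are the bookkeeping that identifies the projected nonlinearity with $-Au+F(u)$ through (\ref{Projection_U}), and the confirmation that the explicit $\mathcal{G}$ is symmetric and locally bounded so that Proposition \ref{Pro.1} is applicable; modulo these checks, the present theorem is essentially a corollary of the abstract framework.
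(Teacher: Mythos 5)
Your proposal is correct and follows essentially the same route as the paper: the paper's own proof is a two-line citation of Lemma \ref{Self-Ad_op} for self-adjointness of $A$, Lemma \ref{lemma_abt_liptz} for the local Lipschitz property of $F$, and Propositions \ref{Pro.2}, \ref{zorn} and \ref{proposition for unique local solution} for the fixed-point and maximality machinery. You merely make explicit two steps the paper leaves implicit (the identification of the projected right-hand side with $-Au+F(u)$ via (\ref{Projection_U}), and the check that $\mathcal{G}$ is symmetric and bounded on squares $[0,L]^{2}$), which is a welcome addition but not a different argument.
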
 

\begin{proof}
    A is a self-adjoint operator by (\ref{Self-Ad_op}), and by using lemma (\ref{lemma_abt_liptz}), $F(u)$ is locally Lipschitz. Therefore, by referring to (\ref{Pro.2}), (\ref{zorn}), and (\ref{proposition for unique local solution}), we can deduce that problem (\ref{PB}) has a unique maximal local solution in $\mathcal{V}$.
\end{proof}

\section{In-variance of Manifold and Global solution to the main problem}
Before proving the invariance of the Hilbert manifold, we mention some crucial lemmas. 
\begin{lemma}{\label{Lemma_weak derivative_Abs}}
    \cite{temam2000navier}, \cite{hussain2015analysis} Assume that $\mathcal{E}, ~~\mathcal{V} , ~~ \mathcal{H}$ satisfy the assumption (\ref{Ass_2.2.2}) then the dual  $\mathcal{H'}$ of $\mathcal{H}$ and dual $\mathcal{V'}$ of $\mathcal{V}$ satisfy the following relation 
\begin{align*}
    \mathcal{V} \hookrightarrow \mathcal{H} \cong \mathcal{H'} \hookrightarrow \mathcal{V'}
\end{align*}
Additionally, if the abstract function $u$ is in $L^{2}\left ( 0, T; \mathcal{V}\right)$ then its weak derivative $\frac{\partial u}{ \partial t}$   is in $L^{2}\left ( 0, T; \mathcal{V'}\right)$. \\
Furthermore, 
\begin{align*}
    |u(t)|^{2} = |u_{0}|^{2}+ 2 \int_{0}^{t}{ \langle u'(p), u(p) \rangle}~dp,  ~~~~~~~t \in [0,T]~~~~~~~- a.e
\end{align*} 

\end{lemma}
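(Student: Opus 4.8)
The plan is to recognise this as the classical Lions--Magenes energy identity attached to a Gelfand triple, and to prove it in two stages: first pinning down the triple structure so that the integrand makes sense, then deriving the identity by approximating $u$ with smooth functions in time. First I would make the identification $\mathcal{H} \cong \mathcal{H}'$ precise via the Riesz representation theorem, and then obtain $\mathcal{H}' \hookrightarrow \mathcal{V}'$ by taking the adjoint $i^{*}$ of the dense continuous inclusion $i\colon \mathcal{V} \hookrightarrow \mathcal{H}$ supplied by Assumption \ref{Ass_2.2.2}. Density of $\mathcal{V}$ in $\mathcal{H}$ makes $i^{*}$ injective, and reflexivity of these Hilbert spaces makes its range dense, yielding the chain $\mathcal{V} \hookrightarrow \mathcal{H} \cong \mathcal{H}' \hookrightarrow \mathcal{V}'$. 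The decisive consequence is that for $f \in \mathcal{H}$ and $v \in \mathcal{V}$ the duality pairing reduces to the inner product, $\langle f, v\rangle = (f,v)_{\mathcal{H}}$, so the integrand $\langle u'(p), u(p)\rangle$ is well defined whenever $u(p)\in\mathcal{V}$ and $u'(p)\in\mathcal{V}'$.

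For the identity itself I would work in the space $W = \{\, u \in L^{2}(0,T;\mathcal{V}) : u' \in L^{2}(0,T;\mathcal{V}') \,\}$ with its graph norm, and proceed in three steps. (i) For $u \in C^{1}([0,T];\mathcal{V})$ the scalar function $t \mapsto |u(t)|_{\mathcal{H}}^{2}$ is continuously differentiable with derivative $2(u'(t),u(t))_{\mathcal{H}} = 2\langle u'(t),u(t)\rangle$ by the product rule, and integrating gives the stated identity directly. (ii) A general $u \in W$ is approximated in the graph norm by such smooth $u_{n}$, obtained by extending $u$ past the endpoints of $[0,T]$ and mollifying in time. (iii) The passage to the limit splits into two pieces: continuity of the bilinear map $(f,v)\mapsto \int_{0}^{t}\langle f(p),v(p)\rangle\,dp$ on $L^{2}(0,T;\mathcal{V}')\times L^{2}(0,T;\mathcal{V})$ handles the integral term, while applying the smooth identity to the differences $u_{n}-u_{m}$ and estimating $\int_{s}^{t}\langle (u_{n}-u_{m})', u_{n}-u_{m}\rangle$ by Cauchy--Schwarz shows that $(u_{n})$ is Cauchy in $C([0,T];\mathcal{H})$. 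This last point simultaneously establishes the continuous embedding $W \hookrightarrow C([0,T];\mathcal{H})$, legitimises the pointwise values $u(t)$ and $u(0)=u_{0}$, and delivers $|u_{n}(t)|_{\mathcal{H}}^{2}\to|u(t)|_{\mathcal{H}}^{2}$, so that the smooth identity survives in the limit for all $t\in[0,T]$.

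The main obstacle I anticipate is justifying the density of $C^{1}([0,T];\mathcal{V})$ in $W$: the time-mollification must be arranged so that $u_{n}\to u$ in $L^{2}(0,T;\mathcal{V})$ and $u_{n}'\to u'$ in $L^{2}(0,T;\mathcal{V}')$ hold simultaneously, which requires first extending $u$ to a neighbourhood of $[0,T]$ in a way that preserves the weak-derivative relation before convolving with a standard mollifier. Once this approximation scheme and the embedding $W \hookrightarrow C([0,T];\mathcal{H})$ are in hand, the remaining manipulations are routine. Since the assertion is precisely the statement recorded in \cite{temam2000navier} and \cite{hussain2015analysis}, one may alternatively invoke it directly; I would nonetheless keep the approximation argument above on record to render the exposition self-contained.
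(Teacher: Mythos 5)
The paper offers no proof of this lemma at all: it is quoted as a known result from the cited references (Temam; Hussain's thesis), so there is nothing internal to compare against. Your outline is the standard Lions--Magenes/Temam argument --- Gelfand triple via Riesz and the adjoint of the dense inclusion, density of smooth-in-time functions in $W=\{u\in L^{2}(0,T;\mathcal{V}):u'\in L^{2}(0,T;\mathcal{V}')\}$, the embedding $W\hookrightarrow C([0,T];\mathcal{H})$, and passage to the limit in the energy identity --- which is precisely how the cited sources establish it, and it is correct. The only caveat worth recording is that the lemma as stated in the paper is imprecise ($u\in L^{2}(0,T;\mathcal{V})$ alone does not guarantee $u'\in L^{2}(0,T;\mathcal{V}')$; this must be assumed, as you implicitly do by working in $W$).
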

\begin{lemma}{\label{Lemma_weak derivative}}
    If $u(t) \in  X_{T}:=L^{2}\left( 0,T;\mathcal{E}\right) \cap C\left( \left[ 0,T\right] ;\mathcal{V}\right)$ and $u$ is a solution to the  main problem (\ref{main_Prb}), then the weak derivative is in $L^{2}\left ( 0, T; \mathcal{V}\right) $, that is, $\frac{\partial u}{ \partial t} \in L^{2}\left ( 0, T; \mathcal{H}\right)$. \\ 

    \begin{proof}
        
We have the following relation from the main problem (\ref{main_Prb}):
\begin{align*}
        \frac{\partial u}{\partial t} &= -Au + F(u(t)) 
    \end{align*}
Where   ~~~~   $ F(u)=\|  u\|^{2}_{{H}^{2}_{0}} ~u + 2\|    u\|^{2}_{{H}^{1}_{0}} ~u  +\| u\|^{2n}_{{L}^{2n}} u- u^{2n-1}  $  ~~~    and  ~~~    $ A = \Delta^{2}-2\Delta$\\  
For showing  $\frac{\partial u}{ \partial t} \in L^{2}\left ( 0, T; \mathcal{H}\right)$, we have to show that every term from the right side of the above equation is in $L^{2}\left ( 0, T; \mathcal{H}\right)$\\
We set; \\
\begin{align*}
    \frac{\partial u}{ \partial t} = \sum_{i=1}^{5}a_{i}
\end{align*}
It follows that:\\ 
$$a_{1}:=-Au, \quad
    a_{2}:=\|  u\|^{2}_{{H}^{2}_{0}} u,\quad
    a_{3}:=2\|    u\|^{2}_{{H}^{1}_{0}} u,\quad
    a_{4}:=\| u\|^{2n}_{{L}^{2n}} u,\quad
    a_{5}:=- u^{2n-1}.$$
Examine the equation $ a_{1}=-Au $
\begin{align*}
    \int_{0}^{T}{|a_{1}(p)|^{2}_{\mathcal{H}}}~dp &= \int_{0}^{T}{|Au(p)|^{2}_{\mathcal{H}}}dp = \int_{0}^{T}{|u(p)|^{2}_{D(A)}}dp \leq \sup_{t \in [0,T]}{ \|u(t)\|^{2}_{\mathcal{V}}} + \int_{0}^{T}{|u(p)|^{2}_{D(A)}}dp  \\
    & =|u(t)|^{2}_{X_{t}}< \infty.
\end{align*} 
It follows that $ a_{1} \in L^{2}\left ( 0, T; \mathcal{H}\right) $ \\

 Next, we examine the second function $ a_{2} =\|  u\|^{2}_{{H}^{2}_{0}} ~u.$ \\
 $ \int_{0}^{T}{|a_{2}(p)|^{2}_{\mathcal{H}}}dp$
 \begin{align*}
      &=  \int_{0}^{T}{| \|  u\|^{2}_{{H}^{2}_{0}} ~u|^{2}_{\mathcal{H}}}dp = \sup_{t \in [0,T]}{ \|u(t)\|^{4}_{{H_{0}^{2}}}} \int^{0}_{T}{|u(p)|_{\mathcal{H}}}~dp \\
      & \leq \sup_{t \in [0,T]}{ \|u(t)\|^{4}_{{H_{0}^{2}}}} \int^{0}_{T}{K_{1}|u(p)|_{\mathcal{V}}}dp \leq K_{1}K_{2} \sup_{t \in [0,T]}{ \|u(t)\|^{4}_{{\mathcal{V}}}} \sup_{t \in [0,T]}{ \|u(t)\|^{4}_{{\mathcal{V}}}} \\
      & \leq K_{1}K_{2} T \left( \sup_{t \in [0,T]}{ \|u(t)\|^{2}_{{\mathcal{V}}}} \right) ^{3} \leq K_{1}K_{2} T\left( \sup_{t \in [0,T]}{ \|u(t)\|^{2}_{\mathcal{V}}} + \int_{0}^{T}{|u(p)|^{2}_{D(A)}}~dp \right) ^{3} \\
      & \leq K_{1}K_{2} T |u(t)|^{6}_{X_{t}}< \infty.
      \end{align*} 
It follows that $ a_{2} \in L^{2}\left ( 0, T; \mathcal{H}\right) $ \\

Now consider the examine function $ a_{3} =2\|    u\|^{2}_{{H}^{1}_{0}} ~u, $ \\
$\int_{0}^{T}{|a_{3}(p)|^{2}_{\mathcal{H}}}dp$
 \begin{align*}
       &=  \int_{0}^{T}{| 2\|  u\|^{2}_{{H}^{1}_{0}} ~u|^{2}_{\mathcal{H}}}dp = 2 \sup_{t \in [0,T]}{ \|u(t)\|^{4}_{{H_{0}^{1}}}} \int^{0}_{T}{|u(p)|_{\mathcal{H}}}~dp \\
      &\leq 2 \sup_{t \in [0,T]}{ \|u(t)\|^{4}_{{H_{0}^{1}}}} \int^{0}_{T}{K_{1}|u(p)|_{\mathcal{V}}}dp\leq 2K_{1}K_{2} \sup_{t \in [0,T]}{ \|u(t)\|^{4}_{{\mathcal{V}}}} \sup_{t \in [0,T]}{ \|u(t)\|^{4}_{{\mathcal{V}}}} \\& \leq 2K_{1}K_{2} T \left( \sup_{t \in [0,T]}{ \|u(t)\|^{2}_{{\mathcal{V}}}} \right) ^{3} \leq 2K_{1}K_{2} T\left( \sup_{t \in [0,T]}{ \|u(t)\|^{2}_{\mathcal{V}}} + \int_{0}^{T}{|u(p)|^{2}_{D(A)}}~dp \right) ^{3} \\
      & \leq2 K_{1}K_{2} T |u(t)|^{6}_{X_{t}} < \infty.
      \end{align*}   

It follows that $ a_{3} \in L^{2}\left ( 0, T; \mathcal{H}\right) $ \\ 

Again examine the function $ a_{4}=\| u\|^{2n}_{{L}^{2n}} u, $\\
$\int_{0}^{T}{|a_{4}(p)|^{2}_{\mathcal{H}}}~dp$
\begin{align*}
     &=  \int_{0}^{T}{| \|  u\|^{2n}_{L^{2n}} u(p)|^{2}_{\mathcal{H}}}dp = \int_{0}^{T}{ \|  u\|^{4n}_{L^{2n}} |u(p)|^{2}_{\mathcal{H}}}dp\leq K^{4n} \int_{0}^{T}{ \|  u\|^{4n}_{\mathcal{V}} ~|u(p)|^{2}_{\mathcal{H}}}~dp \\
    & \leq K^{4n+2} \sup_{t \in [0,T]}{ \|  u\|^{4n}_{\mathcal{V}}}\int_{0}^{T}{ ~|u(p)|^{2}_{\mathcal{V}}}~dp \leq K^{4n+2} T \sup_{t \in [0,T]}{ \|  u\|^{4n+2}_{\mathcal{V}}} \\
    & \leq K^{4n+2} T |u(t)|^{4n+2}_{X_{t}}< \infty.
\end{align*}
      It follows that $ a_{4} \in L^{2}\left ( 0, T; \mathcal{H}\right) $ \\ 
Finally, check out for the function $  a_{5}=- u^{2n-1},$\\

$ \int_{0}^{T}{|a_{5}(p)|^{2}_{\mathcal{H}}}~dp$
\begin{align*}
      &=  \int_{0}^{T}{|  ~u^{2n-1}(p)|^{2}_{\mathcal{H}}}dp\leq K^{4n-2} \int_{0}^{T}{\|  ~u^{2n-1}(p)\|^{2}_{\mathcal{V}}}dp\leq K^{4n-2} T \left( \sup_{t \in [0,T]}{\|u(t)\|^{2}}_{\mathcal{V}}\right)^{2n-1} \\& \leq  K^{4n-2} T |u(t)|^{4n-2}_{X_{t}} < \infty
\end{align*} 
It follows that $ a_{5} \in L^{2}\left ( 0, T; \mathcal{H}\right) $ \\ 
Each $a_{i}$ belongs to $ L^{2}\left ( 0, T; \mathcal{H}\right)$. Therefore, we deduce that: 
\begin{align*}
    \frac{\partial u}{ \partial t} = \sum_{i=1}^{5}a_{i} \in L^{2}\left ( 0, T; \mathcal{H}\right)
\end{align*} 
We now introduce a fundamental proposition that is key to the existence of a global solution to our main problem.  

\end{proof}
\end{lemma}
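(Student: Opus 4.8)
The plan is to read the time derivative off the equation itself. Since $u$ solves (\ref{main_Prb}), we have, for a.e.\ $t$, the identity $\frac{\partial u}{\partial t} = -Au + F(u(t))$ as an equality in $\mathcal{H}$, where $F(u)=\|u\|^{2}_{H^{2}_{0}}u + 2\|u\|^{2}_{H^{1}_{0}}u + \|u\|^{2n}_{L^{2n}}u - u^{2n-1}$. Since $L^{2}(0,T;\mathcal{H})$ is a vector space, it suffices to show that $-Au$ and each of the four summands of $F(u)$ lies in $L^{2}(0,T;\mathcal{H})$; the only input I would use is that $u\in X_{T}$, i.e.\ that $|u|_{X_{T}}^{2}=\sup_{t\in[0,T]}\|u(t)\|_{\mathcal{V}}^{2}+\int_{0}^{T}|u(p)|_{\mathcal{E}}^{2}\,dp<\infty$.

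First I would dispose of the linear part. Because $\mathcal{E}=D(A)$ is equipped with the graph norm, $|Au(p)|_{\mathcal{H}}=|u(p)|_{D(A)}=|u(p)|_{\mathcal{E}}$, so that $\int_{0}^{T}|Au(p)|_{\mathcal{H}}^{2}\,dp=\int_{0}^{T}|u(p)|_{\mathcal{E}}^{2}\,dp\le|u|_{X_{T}}^{2}<\infty$. This uses only the $L^{2}(0,T;\mathcal{E})$ factor of $X_{T}$ and already gives $-Au\in L^{2}(0,T;\mathcal{H})$.

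For the nonlinear part I would exploit that $u\in C([0,T];\mathcal{V})$, so $M:=\sup_{t\in[0,T]}\|u(t)\|_{\mathcal{V}}<\infty$. The three cubic-type terms $\|u\|^{2}_{H^{2}_{0}}u$, $2\|u\|^{2}_{H^{1}_{0}}u$ and $\|u\|^{2n}_{L^{2n}}u$ each factor as a scalar Sobolev norm times $u$; using the continuous embeddings $\mathcal{V}\hookrightarrow H^{2}_{0}$, $\mathcal{V}\hookrightarrow H^{1}_{0}$, $\mathcal{V}\hookrightarrow L^{2n}$ and $\mathcal{V}\hookrightarrow\mathcal{H}$ from Assumption (\ref{Ass_2.2.2}), each is bounded in $\mathcal{H}$ by a fixed power of $\|u(t)\|_{\mathcal{V}}\le M$, so the time integral of its square is dominated by $T$ times a power of $M$ (equivalently a power of $|u|_{X_{T}}$), hence finite. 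A slicker alternative would be to quote Lemma \ref{lemma_abt_liptz} directly: since $F(0)=0$, one has $|F(u(t))|_{\mathcal{H}}\le\mathcal{G}(\|u(t)\|_{\mathcal{V}},0)\,\|u(t)\|_{\mathcal{V}}$, and the right-hand side is bounded on $[0,T]$, giving $F(u)\in L^{\infty}(0,T;\mathcal{H})\hookrightarrow L^{2}(0,T;\mathcal{H})$ in one stroke.

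The step I expect to be the main obstacle is the genuinely nonlinear term $-u^{2n-1}$, which does not factor into a scalar times $u$ and so must be controlled through its full $\mathcal{H}$-norm $\|u^{2n-1}\|_{\mathcal{H}}=\|u\|_{L^{2(2n-1)}}^{2n-1}$. Here I would pass through a Sobolev embedding $\mathcal{V}\hookrightarrow L^{q}$ into the appropriate Lebesgue exponent; this is precisely where the spatial dimension $d$ intervenes and constrains the admissible range of $n$, just as in the estimate of $F_{4}$ in Lemma \ref{lemma_abt_liptz}, which there split into the cases $d=2$ and $d=3$. Once this embedding is available, the square $\|u^{2n-1}\|_{\mathcal{H}}^{2}=\|u\|_{L^{2(2n-1)}}^{2(2n-1)}$ is dominated pointwise in $t$ by a constant times $\|u(t)\|_{\mathcal{V}}^{2(2n-1)}\le M^{2(2n-1)}$, and integrating over the bounded interval $[0,T]$ closes the bound, yielding $-u^{2n-1}\in L^{2}(0,T;\mathcal{H})$. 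Summing the five contributions then gives $\frac{\partial u}{\partial t}\in L^{2}(0,T;\mathcal{H})$, as claimed.
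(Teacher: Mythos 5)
Your proposal is correct and takes essentially the same route as the paper: both split $\partial u/\partial t$ into $-Au$ plus the four summands of $F(u)$, control $-Au$ by the $L^{2}(0,T;\mathcal{E})$ component of the $X_{T}$-norm via the graph norm on $D(A)$, bound the three scalar-coefficient terms by powers of $\sup_{t}\|u(t)\|_{\mathcal{V}}$ through the embeddings of $\mathcal{V}$ into $H^{2}_{0}$, $H^{1}_{0}$, $L^{2n}$ and $\mathcal{H}$, and handle $-u^{2n-1}$ via $|u^{2n-1}|_{\mathcal{H}}=\|u\|^{2n-1}_{L^{4n-2}}$ and the Sobolev embedding $\mathcal{V}\hookrightarrow L^{4n-2}$. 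Your alternative shortcut through Lemma \ref{lemma_abt_liptz} with $F(0)=0$ is a clean condensation of the same estimates, and your write-up is in fact tidier than the paper's (which contains some harmless exponent slips in the $a_{2}$, $a_{3}$ bounds).
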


\begin{proposition}
If $u(t)$, where $t \in [0,T)$,  is a solution of (\ref{main_Prb}) with  $|u_{0}|_{\mathcal{H}}^{2}=1$, $u(t) \in \mathcal{M}, ~~~~~\forall t \in [0,T)$; that is,
\begin{align*}
    u(t) \in \mathcal{M}, ~~~ \forall t \in [0, T)~~~i.e ~~~|u(t)|_{\mathcal{H}}^{2}=1, ~~~ \forall t \in [0, T)
\end{align*} 

\begin{proof}

By using lemmas ( \ref{Lemma_weak derivative_Abs}) and ( \ref{Lemma_weak derivative}), it follows that 
\begin{align*}
    \frac{1}{2} \left( |u(t)|_{\mathcal{H}}^{2}-1\right) &=\frac{1}{2} \left( |u_{0}(t)|_{\mathcal{H}}^{2}-1\right) + \int^{t}_{0}{\langle u'(p), u(p) \rangle} ~dp \\ 
    &= \frac{1}{2} \left( |u_{0}(t)|_{\mathcal{H}}^{2}-1\right) + \int^{t}_{0}{\langle -\Delta^{2} u(p)+2\Delta u(p) , u(p)  \rangle }\\
    &+ \int^{t}_{0}{ \langle \|  u\|^{2}_{{H}^{2}_{0}} ~u(p) + 2\|    u\|^{2}_{{H}^{1}_{0}} ~u(p)  +\| u\|^{2n}_{{L}^{2n}} u(p)- u^{2n-1}(p), u(p) \rangle} ~dp \\ 
    &= \frac{1}{2} \left( |u_{0}(t)|_{\mathcal{H}}^{2}-1\right) + \int^{t}_{0}{ \langle -\Delta^{2} u(p) ,u(p) \rangle} ~dp+  \int^{t}_{0}{ \langle 2\Delta u(p) ,u(p) \rangle} ~dp \\
    &+ \int^{t}_{0}{ \langle\|  u\|^{2}_{{H}^{2}_{0}} ~u(p) ,u(p) \rangle} ~dp + \int^{t}_{0}{ \langle 2\|    u\|^{2}_{{H}^{1}_{0}} ~u(p) ,u(p) \rangle} ~dp+ \int^{t}_{0}{ \langle \| u\|^{2n}_{{L}^{2n}} u(p) ,u(p) \rangle} ~dp\\
    &+ \int^{t}_{0}{ \langle -u^{2n-1}(p) ,u(p) \rangle} ~dp 
\end{align*} 
As ~~~~$ \frac{\partial u}{\partial t} =\pi _{u}(-\Delta^{2}u+2\Delta u -au - u^{2n-1})  \in \mathcal{M}$ ~~~ so~~~ $ \langle \frac{\partial u(p)}{\partial t}, u(p) \rangle = 0$~~~ \\
In addition, ~~~ $ \langle u(p) , u(p)\rangle = |u(p)|^{2}_{\mathcal{H}}$,  and $ |u_{0}|^{2}_{\mathcal{H}}= 1$ It follows that 

\begin{align*}
     \frac{1}{2} \left( |u(t)|_{\mathcal{H}}^{2}-1\right) &= \int_{0}^{t} {\left(\|u\|^{2}_{H^{2}_{0}} + 2 \|u\|^{2}_{H^{1}_{0}} + \|u\|^{2n}_{L^{2n}}\right) \left( |u(p)|^{2}_{\mathcal{H}}-1\right)}~dp
\end{align*} 

Let $ \psi (t) = \left( |u(t)|^{2}_{\mathcal{H}}-1\right) $, implying that

\begin{align*}
     \frac{1}{2} \psi (t) &= \int_{0}^{t} {\left(\|u\|^{2}_{H^{2}_{0}} + 2 \|u\|^{2}_{H^{1}_{0}} + \|u\|^{2n}_{L^{2n}}\right) \psi (p)}~dp
\end{align*} 

\begin{align*}
      \psi '(t) &= 2{\left(\|u\|^{2}_{H^{2}_{0}} + 2 \|u\|^{2}_{H^{1}_{0}} + \|u\|^{2n}_{L^{2n}}\right) \psi (t)}
\end{align*} 
\begin{align*}
      \psi (t) &= C e^{2{\left(\|u\|^{2}_{H^{2}_{0}} + 2 \|u\|^{2}_{H^{1}_{0}} + \|u\|^{2n}_{L^{2n}}\right)}t}   
\end{align*} 
Now, $\psi (0) =C =0  \implies \psi (t)=0 $. It follows that
\begin{align*}
    |u(t)|^{2}_{\mathcal{H}}-1 = 0 \\
    |u(t)|^{2}_{\mathcal{H}}= 1
\end{align*} 
thus, it is proved that $ u(t) \in \mathcal{M}$
\end{proof} 
\end{proposition}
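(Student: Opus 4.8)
The plan is to show manifold invariance by deriving a homogeneous linear ODE for the squared-norm defect $\psi(t) := |u(t)|_{\mathcal{H}}^2 - 1$ and then invoking uniqueness (or the explicit exponential solution) to conclude $\psi \equiv 0$, given $\psi(0)=0$. The essential mechanism is the orthogonality built into the projection $\pi_u$: since the right-hand side of (\ref{main_Prb}) lands in the tangent space $T_u\mathcal{M}$, we have $\langle u'(p), u(p)\rangle = 0$, and this is precisely what forces the norm to stay put. The geometric intuition is that the flow is everywhere tangent to the unit sphere $\mathcal{M}$, so a trajectory starting on $\mathcal{M}$ cannot leave it.

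First I would record the energy identity from Lemma \ref{Lemma_weak derivative_Abs}, which is legitimate because Lemma \ref{Lemma_weak derivative} guarantees $u' \in L^2(0,T;\mathcal{H})$ so the pairing $\langle u'(p), u(p)\rangle$ is well-defined and the fundamental-theorem-of-calculus formula for $|u(t)|_{\mathcal{H}}^2$ applies. This gives
\begin{equation*}
\tfrac{1}{2}\bigl(|u(t)|_{\mathcal{H}}^2 - 1\bigr) = \tfrac{1}{2}\bigl(|u_0|_{\mathcal{H}}^2 - 1\bigr) + \int_0^t \langle u'(p), u(p)\rangle\,dp.
\end{equation*}
The next step is to substitute $u'(p) = \pi_{u(p)}(\cdots) = -Au(p) + F(u(p))$ and expand the inner product term by term, computing each pairing $\langle -\Delta^2 u, u\rangle$, $\langle 2\Delta u, u\rangle$, $\langle \|u\|^2_{H^2_0} u, u\rangle$, and so on, using integration by parts exactly as in the projection computation (\ref{Projection_U}). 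The key algebraic observation is that the quadratic/nonlinear coefficients recombine so that the integrand becomes $\bigl(\|u\|^2_{H^2_0} + 2\|u\|^2_{H^1_0} + \|u\|^{2n}_{L^{2n}}\bigr)\bigl(|u(p)|_{\mathcal{H}}^2 - 1\bigr)$; the ``$-1$'' appears precisely because the projection subtracts $\langle (\cdots), u\rangle u$ and evaluating $\langle u, u\rangle = |u|_{\mathcal{H}}^2$ against the same scalar coefficients produces the defect $|u|^2_{\mathcal{H}} - 1$ rather than $|u|^2_{\mathcal{H}}$.

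With $\psi(t) = |u(t)|_{\mathcal{H}}^2 - 1$ and $\psi(0) = 0$ (from $|u_0|_{\mathcal{H}}^2 = 1$), the identity reads
\begin{equation*}
\tfrac{1}{2}\psi(t) = \int_0^t \gamma(p)\,\psi(p)\,dp, \qquad \gamma(p) := \|u(p)\|^2_{H^2_0} + 2\|u(p)\|^2_{H^1_0} + \|u(p)\|^{2n}_{L^{2n}}.
\end{equation*}
Since $u \in C([0,T];\mathcal{V})$, the coefficient $\gamma$ is bounded on $[0,T]$, so this is a linear Volterra integral equation with bounded kernel and zero forcing; I would conclude $\psi \equiv 0$ by Gronwall's inequality applied to $|\psi(t)| \le 2\int_0^t \gamma(p)|\psi(p)|\,dp$, which yields $|\psi(t)| \le 0$. (The paper instead differentiates to $\psi' = 2\gamma\psi$ and reads off $\psi(t) = C\exp(2\int_0^t\gamma)$; that is fine once one notes $\gamma$ is only $t$-dependent through $u$, so the clean exponential formula should really be read as a Gronwall bound rather than a literal constant-coefficient solution.)

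The main obstacle is the careful bookkeeping in the term-by-term expansion: one must verify that the signs and the squared-norm factors coming out of the integrations by parts match the coefficients $\|u\|^2_{H^2_0}$, $2\|u\|^2_{H^1_0}$, $\|u\|^{2n}_{L^{2n}}$ exactly, so that the $u^{2n-1}$ nonlinearity pairs as $\langle u^{2n-1}, u\rangle = \|u\|^{2n}_{L^{2n}}$ and cancels against the corresponding projection term up to the $|u|^2_{\mathcal{H}}$ factor. Once that cancellation is confirmed and the common factor $\gamma(p)$ is factored out alongside $(|u(p)|^2_{\mathcal{H}} - 1)$, the rest is a routine Gronwall/uniqueness argument.
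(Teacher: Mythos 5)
Your proposal follows essentially the same route as the paper's own proof: the same energy identity obtained from Lemmas \ref{Lemma_weak derivative_Abs} and \ref{Lemma_weak derivative}, the same term-by-term expansion in which the projection terms recombine to give the integrand $\gamma(p)\,\bigl(|u(p)|_{\mathcal{H}}^{2}-1\bigr)$ with $\gamma(p)=\|u\|^{2}_{H^{2}_{0}}+2\|u\|^{2}_{H^{1}_{0}}+\|u\|^{2n}_{L^{2n}}$, and the same conclusion $\psi\equiv 0$ from $\psi(0)=0$. Your only deviation, closing the argument by Gronwall's inequality on the Volterra identity rather than via the paper's displayed solution $\psi(t)=Ce^{2\gamma t}$, is a minor but correct tightening, since $\gamma$ depends on $t$ through $u$ and the constant-coefficient exponential formula is not literally valid as written.
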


The following lemma introduces the energy function:

\begin{lemma} {\label{Energy_ftn}}
    The map $ \mathcal{Y}: \mathcal{V}\longrightarrow R $ defined as 
\begin{align*}
    \mathcal{Y}(u) = \frac{1}{2} \|u\|^{2}_{\mathcal{V}} + \frac{1}{2n} \|u\|^{2n}_{L^{2n}}, ~~~~ n \in N 
\end{align*} 
belongs to the $C^{2}$ class.
\end{lemma}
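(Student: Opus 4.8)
The plan is to split the functional as $\mathcal{Y} = \mathcal{Y}_1 + \mathcal{Y}_2$, where $\mathcal{Y}_1(u) = \tfrac{1}{2}\|u\|_{\mathcal{V}}^2$ and $\mathcal{Y}_2(u) = \tfrac{1}{2n}\|u\|_{L^{2n}}^{2n}$, and prove that each summand is of class $C^2$. The first summand is governed by the inner product of the Hilbert space $\mathcal{V}$: it is a continuous quadratic form, hence Fréchet differentiable everywhere with $D\mathcal{Y}_1(u)[h] = \langle u,h\rangle_{\mathcal{V}}$ and $D^2\mathcal{Y}_1(u)[h,k] = \langle h,k\rangle_{\mathcal{V}}$. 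Since this second derivative does not depend on the base point $u$, it is trivially continuous in $u$, so $\mathcal{Y}_1 \in C^2$ (indeed $C^\infty$). All the work therefore concentrates on the superposition term $\mathcal{Y}_2$.

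For $\mathcal{Y}_2$ the strategy is the classical one: compute the Gateaux derivatives explicitly, show that each is a bounded multilinear form, and then verify that the second derivative depends continuously on the base point, which by the standard criterion (a functional whose second Gateaux derivative exists and is norm-continuous is of class $C^2$, the Gateaux derivatives then automatically being Fréchet) yields the claim. Differentiating $t \mapsto \tfrac{1}{2n}\int_{\mathcal{O}}|u+th|^{2n}$ formally gives
\[
D\mathcal{Y}_2(u)[h] = \int_{\mathcal{O}}|u|^{2n-2}u\,h\,dx, \qquad D^2\mathcal{Y}_2(u)[h,k] = (2n-1)\int_{\mathcal{O}}|u|^{2n-2}h\,k\,dx .
\]
Boundedness follows from Hölder's inequality: the first form is controlled by $\|u\|_{L^{2n}}^{2n-1}\|h\|_{L^{2n}}$ (exponents $\tfrac{2n}{2n-1}$ and $2n$), and the second by $\|u\|_{L^{2n}}^{2n-2}\|h\|_{L^{2n}}\|k\|_{L^{2n}}$ (exponents $\tfrac{n}{n-1},2n,2n$). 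Using the continuous embedding $\mathcal{V}\hookrightarrow L^{2n}(\mathcal{O})$ coming from Assumption \ref{Ass_2.2.2}, every $L^{2n}$-norm is dominated by the $\mathcal{V}$-norm, so both forms are bounded on $\mathcal{V}$.

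The decisive step is the continuity of $u \mapsto D^2\mathcal{Y}_2(u)$ in the bilinear-form norm. For $u,v\in\mathcal{V}$ one must estimate $(2n-1)\int_{\mathcal{O}}(|u|^{2n-2}-|v|^{2n-2})hk\,dx$ uniformly over $\|h\|_{\mathcal{V}},\|k\|_{\mathcal{V}}\le 1$. The key elementary inequality is
\[
\bigl||a|^{2n-2}-|b|^{2n-2}\bigr|\le (2n-2)\,(|a|+|b|)^{2n-3}\,|a-b|, \qquad a,b\in\mathbb{R},
\]
valid for $n\ge \tfrac{3}{2}$ by the mean value theorem (the case $n=1$, where $\mathcal{Y}_2$ is merely quadratic, being trivial). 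Inserting this and applying Hölder with exponents $\tfrac{2n}{2n-3},2n,2n,2n$ (whose reciprocals sum to $\tfrac{2n-3}{2n}+\tfrac{3}{2n}=1$) yields
\[
\bigl|D^2\mathcal{Y}_2(u)[h,k]-D^2\mathcal{Y}_2(v)[h,k]\bigr|\le C\bigl(\|u\|_{L^{2n}}+\|v\|_{L^{2n}}\bigr)^{2n-3}\|u-v\|_{L^{2n}}\,\|h\|_{L^{2n}}\|k\|_{L^{2n}},
\]
and the embedding $\mathcal{V}\hookrightarrow L^{2n}$ converts this into a bound by $C'\bigl(\|u\|_{\mathcal{V}}+\|v\|_{\mathcal{V}}\bigr)^{2n-3}\|u-v\|_{\mathcal{V}}$ on bounded subsets, establishing locally Lipschitz (hence continuous) dependence of the second derivative. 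I expect this last estimate to be the main obstacle: keeping the Hölder exponents consistent and handling the factor $(|u|+|v|)^{2n-3}$ so that it reassembles as an $L^{2n}$-norm (since $(2n-3)\cdot\tfrac{2n}{2n-3}=2n$) is where the bookkeeping is delicate. Once this continuity is in place and the Gateaux derivatives are upgraded to Fréchet ones, the conclusion $\mathcal{Y}\in C^2$ follows.
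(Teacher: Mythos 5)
The paper states Lemma \ref{Energy_ftn} without any proof, so there is no argument of the authors' to compare yours against; your proposal supplies the missing proof and is essentially correct. The decomposition into the quadratic Hilbert-norm part $\mathcal{Y}_1$ and the superposition term $\mathcal{Y}_2$, the explicit first and second derivatives, the H\"older bookkeeping (exponents $\tfrac{2n}{2n-1},2n$ for the first form, $\tfrac{n}{n-1},2n,2n$ for the second, and $\tfrac{2n}{2n-3},2n,2n,2n$ for the continuity estimate), the reassembly of $(|u|+|v|)^{2n-3}$ into an $L^{2n}$-norm via $(2n-3)\cdot\tfrac{2n}{2n-3}=2n$, and the upgrade from Gateaux to Fr\'echet differentiability through norm-continuity of the derivatives are all correctly executed, and the embedding $\mathcal{V}\hookrightarrow L^{2n}(\mathcal{O})$ you invoke is the same one the paper uses repeatedly (e.g.\ in Lemma \ref{lemma_abt_liptz}). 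Two minor points, neither a genuine gap. First, the mean-value inequality $\bigl|\,|a|^{2n-2}-|b|^{2n-2}\bigr|\le(2n-2)(|a|+|b|)^{2n-3}|a-b|$ requires $2n-3\ge 0$; since the lemma takes $n\in\mathbb{N}$ this excludes only $n=1$, which you dispose of separately, but for the paper's more general range $n>\tfrac12$ the window $1<n<\tfrac32$ would need a different elementary inequality and would in general yield only $C^{1}$ with H\"older-continuous derivative rather than $C^2$ --- worth flagging, since the paper is inconsistent about whether $n$ is an integer. Second, the existence of the Gateaux derivatives (differentiation under the integral sign) deserves one line via dominated convergence, using $|u+th|^{2n-1}|h|\le C\bigl(|u|^{2n-1}+|h|^{2n-1}\bigr)|h|\in L^{1}(\mathcal{O})$ for $u,h\in L^{2n}(\mathcal{O})$, and the continuity of the \emph{first} derivative (needed for $C^1$) should be stated explicitly, though it follows from the same estimates.
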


\begin{theorem}{\label{global_sol_thm}}

By assuming $u$ as the local mild solution of the problem (\ref{main_Prb}), for $t \in [0,T)$ with initial condition  $u_{0} \in  \mathcal{V} \cup \mathcal{M}$, we have

\begin{align*}
    \|u\|_{\mathcal{V}} \leq 2 ~\mathcal{Y}(u_{0}), ~~~~ \forall t \in [0,T)
\end{align*} 
where $\mathcal{Y}$ denotes the energy function defined in (\ref{Energy_ftn}). Furthermore, $T= \infty$. More precisely, $u$ is a unique global solution to problem (\ref{main_Prb}).\\

\begin{proof}
    By definition of the norm on $\mathcal{V}$ and using lemma ( \ref{Lemma_weak derivative_Abs}) and ( \ref{Lemma_weak derivative}), the following relation is considered:\\
    $\frac{\|u\|^{2}_{\mathcal{V}}}{2}-  \frac{\|u_{0}\|^{2}_{\mathcal{V}}}{2}$
\begin{align*}
     &= \frac{\|u\|^{2}_{L^{2}}}{2}+   {\|\nabla u\|^{2}_{L^{2}}}{} + \frac{\|\Delta u\|^{2}_{L^{2}}}{2}+ \frac{\| u\|^{2}_{H^{1}_{0}}}{2}-\frac{\|u_{0}\|^{2}_{L^{2}}}{2}-   {\|\nabla u_{0}\|^{2}_{L^{2}}}{} - \frac{\|\Delta u_{0}\|^{2}_{L^{2}}}{2}- \frac{\| u_{0}\|^{2}_{H^{1}_{0}}}{2}\\
     &=  2 \int^{t}_{0}{ \langle \nabla u(p), \nabla u_{p}(p) \rangle}~dp + \int^{t}_{0}{ \langle \Delta u(p), \Delta u_{p}(p) \rangle}~dp +  \int^{t}_{0}{ \langle u(p),  u_{p}(p) \rangle}~dp
\end{align*}  

As $ u(t) \in \mathcal{V} \cap \mathcal{M}$, $ u_{p}(p) \in T\mathcal{M}$ and $\langle u(p),  u_{p}(p) \rangle = 0 $, it follows that\\
$\frac{\|u\|^{2}_{\mathcal{V}}}{2}-  \frac{\|u_{0}\|^{2}_{\mathcal{V}}}{2}$
\begin{align*}
     &= 2  \int^{t}_{0}{ \langle \nabla u(p), \nabla u_{p}(p) \rangle}~dp + \int^{t}_{0}{ \langle \Delta u(p), \Delta u_{p}(p) \rangle}~dp  \\
    &=      - 2\int^{t}_{0}{ \langle \Delta u(p),  u_{p}(p) \rangle}~dp + \int^{t}_{0}{ \langle \Delta^{2} u(p),  u_{p}(p) \rangle}~dp  \\
    &= \int^{t}_{0} {\langle \Delta^{2} u(p) -2 \Delta u(p),  u_{p}(p) \rangle}~dp \\
    &= \int^{t}_{0} {\langle \Delta^{2} u(p) -2 \Delta u(p) + u_{p}(p),  u_{p}(p) \rangle}~dp  - \int^{t}_{0} {\langle u_{p}(p), u_{p}(p) \rangle } ~ dp \\
    &= -\int^{t}_{0} {\left|\frac{du}{dp} \right|^{2}} ~dp + \int^{t}_{0} {\langle \|  u\|^{2}_{{H}^{2}_{0}} ~u(p) + 2\|    u\|^{2}_{{H}^{1}_{0}} ~u (p) +\| u\|^{2n}_{{L}^{2n}} u(p)- u(p)^{2n-1} ,  u_{p}(p) \rangle}~dp
\end{align*}

As $ u(t) \in \mathcal{V} \cap \mathcal{M}$ so $ u_{p}(p) \in T\mathcal{M}$ and $\langle u(p),  u_{p}(p) \rangle = 0 $, it follows:

\begin{align*}
    \frac{\|u\|^{2}_{\mathcal{V}}}{2}-  \frac{\|u_{0}\|^{2}_{\mathcal{V}}}{2} &= -\int^{t}_{0} {\left\|u_{p}(p) \right\|_{L^{2}}^{2}} ~dp - \int_{0}^{t}{\langle u(p)^{2n-1} ,  u_{p}(p) \rangle}~dp \\
    &= -\int^{t}_{0} {\left\|u_{p}(p) \right\|_{L^{2}}^{2}} ~dp -\frac{1}{2n} \int_{0}^{t}{\int_{D}{\frac{d}{dp}u^{2n}(p)}}~dp\\
    &= -\int^{t}_{0} {\left\|u_{p}(p) \right\|_{L^{2}}^{2}} ~dp -\frac{1}{2n} \int_{0}^{t}{{\frac{d}{dp} \|u(p)\|^{2n}_{L^{2n}}}}~dp\\
    &= -\int^{t}_{0} {\left\|u_{p}(p) \right\|_{L^{2}}^{2}} ~dp - \left( \frac{\|u\|^{2n}_{L^{2n}}}{2n}-\frac{\|u_{0}\|^{2n}_{L^{2n}}}{2n}\right) \\
     \frac{\|u\|^{2}_{\mathcal{V}}}{2}+ \frac{1}{2n} \|u\|^{2n}_{L^{2n}}- \left( \frac{\|u_{0}\|^{2}_{\mathcal{V}}}{2}+\frac{1}{2n}\|u_{0}\|^{2n}_{L^{2n}} \right) &= -\int^{t}_{0} {\left\|u_{p}(p) \right\|_{L^{2}}^{2}} ~dp\\
\mathcal{Y}(u(t))- \mathcal{Y}(u_{0}) &= -\int^{t}_{0} {\left\|u_{p}(p) \right\|_{L^{2}}^{2}} ~dp
\end{align*}
Where 
\begin{align*}
    \mathcal{Y}(u) = \frac{1}{2} \|u\|^{2}_{\mathcal{V}} + \frac{1}{2n} \|u\|^{2n}_{L^{2n}}, ~~~~ n \in N 
\end{align*} 

From the above relation, it is clear that $\mathcal{Y}$ is a non-increasing function and
\begin{align*}
    \mathcal{Y}(u(t))\leq  \mathcal{Y}(u_{0}),~~~\forall t \in [0, T)
\end{align*}
Therefore, we can deduce that:
\begin{align*}
   \|u(t)\|_{\mathcal{V}}\leq 2~\mathcal{Y}(u(t))\leq  2~\mathcal{Y}(u_{0}), ~~~~~~\forall t \in [0,T) \\
\sup_{t \in [0,T)}{\|u(t)\|_{\mathcal{V}}}\leq  2~\mathcal{Y}(u_{0}), ~~~~~~\forall t \in [0,T)
\end{align*}

Thus, taking $2~\mathcal{Y}(u_{0})=K$ and referring to Proposition (\ref{proposition_for_global_solution}), $T$ becomes infinity, that is,  $T= \infty$ and $u$ is a global solution to the main problem (\ref{main_Prb}).\\
\\
\textbf{Uniqueness of the global solution} 
Let us assume that $u(t):=u_{1}(t)-u_{2(t)}$ where\\
\begin{align*}
   & u_{1}(t)~~~ \text{satisfies}~~~  \frac{\partial u_{1}(t)}{\partial t} =  -\Delta^{2}u_{1}(t) + 2 \Delta u_{1}(t)+ F(u_{1}(t)) \\
    &\text{and}~~~ u_{2}(t) ~~~\text{satisfies}~~~ \frac{\partial u_{2}(t)}{\partial t} =  -\Delta^{2} u_{2}(t) + 2 \Delta u_{2}(t) + F(u_{2}(t)) \\
    &\text{such}\,\,\text{that} ~~~ u(0)= u_{1}(0)-u_{2}(0) = 0 ~~ \text{and} \\
&\frac{\partial u(t)}{\partial t}= \frac{\partial }{\partial t}\left(u_{1}(t)-u_{2}(t) \right)  =  -\Delta^{2} u(t) + 2 \Delta u(t) + F(u_{1}(t))-F(u_{2}(t))
\end{align*}
As shown previously,
\begin{align*}
     \frac{\|u\|^{2}_{\mathcal{V}}}{2}-  \frac{\|u_{0}\|^{2}_{\mathcal{V}}}{2} &= -\int^{t}_{0} {\left\|u_{p}(p) \right\|_{L^{2}}^{2}} ~dp+ \int^{t}_{0} {\langle \Delta^{2} u(p) -2 \Delta u(p) + u_{p}(p),  u_{p}(p) \rangle}~dp   \\    
\end{align*}
By using Cauchy Schwartz Inequality, it follows:\\
$ \frac{\|u\|^{2}_{\mathcal{V}}}{2}-  \frac{\|u_{0}\|^{2}_{\mathcal{V}}}{2}$
\begin{align*}
    &\leq   -\int^{t}_{0} {\left\|u_{p}(p) \right\|_{L^{2}}^{2}} ~dp + \int^{t}_{0} \left( \int_{D}|\Delta^{2} u(p) -2 \Delta u(p) + u_{p}(p)|^{2} ~dp\right)^{\frac{1}{2}}{\left(\int_{D}|u_{p}(p)|^{2} ~dp\right)^{\frac{1}{2}}   }~dp \\
    & \leq  -\int^{t}_{0} {\left\|u_{p}(p) \right\|_{L^{2}}^{2}} ~dp + \int^{t}_{0} {\|\Delta^{2} u(p) -2 \Delta u(p) + u_{p}(p)\|_{L^{2}}}{\|u_{p}(p)\|_{L^{2}}} ~dp  
\end{align*}

Using the fact that, $ ab \leq \frac{a^{2}+b^{2}}{2}$, it follows:
\begin{align*}
     \frac{\|u\|^{2}_{\mathcal{V}}}{2}-  \frac{\|u_{0}\|^{2}_{\mathcal{V}}}{2} &\leq   -\int^{t}_{0} {\left\|u_{p}(p) \right\|_{L^{2}}^{2}} ~dp + \frac{1}{2}\int^{t}_{0} {\|\Delta^{2} u(p) -2 \Delta u(p) + u_{p}(p)\|^{2}_{L^{2}}}~dp + \frac{1}{2} \int_{0}^{t}{\|u_{p}(p)\|^{2}_{L^{2}}} ~dp 
\end{align*}

    As $u_{0}(0)=0$ and $\|u_{0}\|_{\mathcal{V}} =0 $, it follows that
    \begin{align*}
       \frac{\|u\|^{2}_{\mathcal{V}}}{2} &\leq   -\int^{t}_{0} {\left\|u_{p}(p) \right\|_{L^{2}}^{2}} ~dp + \frac{1}{2}\int^{t}_{0} {\|\Delta^{2} u(p) -2 \Delta u(p) + u_{p}(p)\|^{2}_{L^{2}}}dp\\ &+ \frac{1}{2} \int_{0}^{t}{\|u_{p}(p)\|^{2}_{L^{2}}} dp  \\
       & \leq - \frac{1}{2}\int^{t}_{0} {\left\|u_{p}(p) \right\|_{L^{2}}^{2}} ~dp + \frac{1}{2}\int^{t}_{0} {\|\Delta^{2} u(p) -2 \Delta u(p) + u_{p}(p)\|^{2}_{L^{2}}}~dp\\
      \|u\|^{2}_{\mathcal{V}} +  \int^{t}_{0} {\left\|u_{p}(p) \right\|_{L^{2}}^{2}} ~dp &\leq  \int^{t}_{0} {\|\Delta^{2} u(p) -2 \Delta u(p) + u_{p}(p)\|^{2}_{L^{2}}}~dp \\
      & \leq \int_{0}^{t}{\|F(u_{1})-F(u_{2})\|^{2}}~dp
    \end{align*} 

But, $\|F(u_{1})-F(u_{2})\| \leq G\left(\|u_{1}\|,\|u_{2}\|\right)\|u_{1}-u_{2}\|_{\mathcal{V}{}}$, it follows:
\begin{align*}
     \|u\|^{2}_{\mathcal{V}} +  \int^{t}_{0} {\left\|u_{p}(p) \right\|_{L^{2}}^{2}} ~dp &\leq  \int_{0}^{t}{\left(G\left(\|u_{1}\|,\|u_{2}\|\right)\|\right)^{2}}\|u_{1}-u_{2}\|_{\mathcal{V}}^{2}~dp\\
     & \leq   \int_{0}^{t}{\left(G\left(\|u_{1}\|,\|u_{2}\|\right)\|\right)^{2}}\|u\|_{\mathcal{V}}^{2}~dp
\end{align*}
Using Gronwall's inequality, we obtain:

\begin{align*}
    \|u(t)\|^{2}_{\mathcal{V}} \leq 0 \cdot.
\end{align*}
So it follows that,
\begin{equation}
  u(t)=0 \quad\text{ i.e. }\quad 
    u_{1}(t)=u_{2}(t).  
\end{equation}

Thus, the uniqueness of the global solution to problem (\ref{main_Prb}) was proved.
\end{proof}  
\end{theorem}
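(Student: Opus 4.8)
The plan is to exploit the gradient-flow structure of (\ref{main_Prb}): the energy functional $\mathcal{Y}$ of Lemma~\ref{Energy_ftn} should be non-increasing along any solution, which furnishes a uniform-in-time bound on $\|u(t)\|_{\mathcal{V}}$ and thereby upgrades the local (maximal) solution to a global one via Proposition~\ref{proposition_for_global_solution}. Uniqueness is then a separate, and essentially routine, Gronwall argument.

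First I would establish the energy dissipation identity
$$\mathcal{Y}(u(t)) - \mathcal{Y}(u_0) = -\int_0^t \left\| u_p(p)\right\|_{L^2}^2\,dp.$$
To derive it I would differentiate $\tfrac12\|u\|_{\mathcal{V}}^2$ using the explicit norm decomposition on $\mathcal{V}=H_0^1\cap H^2$ together with the integration-by-parts formula of Lemma~\ref{Lemma_weak derivative_Abs}; the prerequisite $u_t\in L^2(0,T;\mathcal{H})$ that makes every pairing meaningful is exactly Lemma~\ref{Lemma_weak derivative}. The crucial algebraic move is to rewrite the linear contribution as $\langle \Delta^2 u -2\Delta u + u_t,\,u_t\rangle - \|u_t\|_{L^2}^2$ and then replace $\Delta^2 u - 2\Delta u + u_t$ by the nonlinearity $F(u)$ through the equation itself. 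The manifold constraint enters because $u(t)\in\mathcal{M}$ and $u_t=\pi_u(\cdots)\in T_u\mathcal{M}$ force $\langle u(p),u_p(p)\rangle=0$, so the reaction contributions proportional to $\langle u,u\rangle u$ drop out; the surviving nonlinear terms reassemble into $\tfrac{1}{2n}\tfrac{d}{dp}\|u\|_{L^{2n}}^{2n}$, which merges with the $\mathcal{V}$-part to reproduce $\mathcal{Y}$.

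Granted this identity, the a priori bound is immediate: since $\mathcal{Y}(u)\ge \tfrac12\|u\|_{\mathcal{V}}^2$ and $\mathcal{Y}$ is non-increasing, one obtains $\sup_{t\in[0,T)}\|u(t)\|_{\mathcal{V}}^2 \le 2\mathcal{Y}(u_0)=:K<\infty$, and Proposition~\ref{proposition_for_global_solution} yields $T=\infty$. For uniqueness I would set $u=u_1-u_2$ for two solutions with the same datum, so that $u(0)=0$ and $u_t=-\Delta^2 u+2\Delta u + F(u_1)-F(u_2)$. Running the same energy computation on $u$, applying Cauchy--Schwarz and Young's inequality $ab\le\tfrac12(a^2+b^2)$, and invoking the local Lipschitz estimate of Lemma~\ref{lemma_abt_liptz}, I would arrive at
$$\|u(t)\|_{\mathcal{V}}^2 \le \int_0^t \mathcal{G}\!\left(\|u_1\|_{\mathcal{V}},\|u_2\|_{\mathcal{V}}\right)^2 \|u(p)\|_{\mathcal{V}}^2\,dp,$$
whence Gronwall's inequality forces $u\equiv 0$, i.e. $u_1=u_2$.

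The main obstacle I expect is the rigorous justification of the energy identity in the weak setting. One must verify that the chain rule for $t\mapsto\mathcal{Y}(u(t))$ is legitimate given only $u\in X_T$ with $u_t\in L^2(0,T;\mathcal{H})$, carry out the $H^2$ and $H^1_0$ pairings via integration by parts without spurious boundary terms, and confirm that $\pi_u$ genuinely renders $u_t$ tangent to $\mathcal{M}$ for almost every $t$ so that the linear reaction term cancels cleanly. Once this identity is secured, both the global-existence conclusion and the Gronwall uniqueness estimate follow with only routine computation.
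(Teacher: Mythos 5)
Your proposal follows essentially the same route as the paper's own proof: the energy identity $\mathcal{Y}(u(t))-\mathcal{Y}(u_0)=-\int_0^t\|u_p(p)\|_{L^2}^2\,dp$ obtained via the norm decomposition on $\mathcal{V}$, integration by parts, the tangency $\langle u,u_p\rangle=0$ killing the reaction terms, and the substitution of $F(u)$ for $\Delta^2u-2\Delta u+u_p$; then the bound $\sup_t\|u(t)\|_{\mathcal{V}}\le 2\mathcal{Y}(u_0)$ combined with Proposition~\ref{proposition_for_global_solution} for globality, and the Cauchy--Schwarz/Young/Lipschitz/Gronwall chain for uniqueness. The approach and all key steps match; your version is, if anything, slightly more careful in writing the a priori bound on $\|u\|_{\mathcal{V}}^2$ rather than $\|u\|_{\mathcal{V}}$.
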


\section{Gradient System} 
In this section, we demonstrate that the global solution to the main problem (\ref{main_Prb}) forms a gradient system.
Before demonstrating this, we discuss the important prepositions and definitions related to them.

\begin{proposition}{\label{Pre_compact_lemma}}

Let the solution $u(t)$ follow theorem (\ref{global_sol_thm}); then, the orbit $\{ u(t); t\geq 1\}$ is pre-compact in $\mathcal{V}$.
\begin{proof}
    Recall that $\{ u(t); t\geq 1\}$  is precompact in $\mathcal{V}$ if it is bounded in $D(A^{\mu})$.\\ 
    
    What we are about to do is that  $\{ u(t); t\geq 1\}$ is bounded in $D(A^{\mu})$, ~~for $\mu > \frac{1}{2}$  \\ \text{Where,}~~~~$A= \Delta^{2}-2 \Delta$ \\
    \\
From the variation in the constant formula, we have:
\begin{align*}
    A^{\mu}u(t)= A^{\mu} e^{-At}u_{0}+ \int^{t}_{0} A^{\mu} e^{-A(t-p)} F(u(p)) ~dp
\end{align*}
Where $ F(u)=\|  u\|^{2}_{{H}^{2}_{0}} ~u + 2\|    u\|^{2}_{{H}^{1}_{0}} ~u  +\| u\|^{2n}_{{L}^{2n}} u- u^{2n-1}  $

\begin{align*}
\left|A^{\mu}u(t) \right|_{\mathcal{H}} & = \left|  A^{\mu} e^{-At}u_{0}+ \int^{t}_{0} A^{\mu} e^{-A(t-p)} F(u(p)) ~dp\right|_{\mathcal{H}}\\
 & \leq \left| A^{\mu} e^{-At}u_{0}\right|_{\mathcal{H}} + \left|\int^{t}_{0} A^{\mu} e^{-A(t-p)} F(u(p)) ~dp \right|_{\mathcal{H}}\\
 & \leq  \left|A^{\mu} e^{-At}u_{0} \right|_{\mathcal{H}} + \int^{t}_{0} \left|A^{\mu} e^{-A(t-p)} F(u(p))\right|_{\mathcal{H}} ~dp
\end{align*}

As by using proposition 1.23 in \cite{henry2006geometric} , we have $ \left|A^{\mu} e^{-At} \right|_{\mathcal{H}} \leq M_{\mu}t^{-\mu} e^{-\delta t}$\\
therefore, 
\begin{align*}
    \left|A^{\mu}u(t) \right|_{\mathcal{H}} & \leq M_{\mu}t^{-\mu} e^{-\delta t} \left|u_{0}\right|_{\mathcal{H}}+ \int^{t}_{0} M_{\mu} t^{t-\mu}e^{-\delta(t-p)} \left|F(u(p))\right|_{\mathcal{H}} ~dp
\end{align*}
As $u_{0} \in \mathcal{M}$ so $ \left|u_{0}\right|_{\mathcal{H}}=1 $,
\begin{align}{\label{Main_exp_compact}}
     \left|A^{\mu}u(t) \right|_{\mathcal{H}} & \leq M_{\mu}t^{-\mu} e^{-\delta t} + \int^{t}_{0} M_{\mu} (t-p)^{-\mu}e^{-\delta(t-p)} \left|F(u(p))\right|_{\mathcal{H}} ~dp
\end{align}
Now we will compute $\left|F(u(p))\right|_{\mathcal{H}} $\\

\begin{align*}
    \left|F(u(p))\right|_{\mathcal{H}}& =\left| \|  u\|^{2}_{{H}^{2}_{0}} ~u + 2\|    u\|^{2}_{{H}^{1}_{0}} ~u  +\| u\|^{2n}_{{L}^{2n}} u- u^{2n-1} \right|_{\mathcal{H}} \\
    & \leq  \|  u\|^{2}_{{H}^{2}_{0}} ~|u|_{\mathcal{H}} + 2\|    u\|^{2}_{{H}^{1}_{0}} ~|u|_{\mathcal{H}} +\| u\|^{2n}_{{L}^{2n}} |u|_{\mathcal{H}}+ \left| u^{2n-1} \right|_{\mathcal{H}}
\end{align*}
 Because $u \in \mathcal{M}$, $ \left|u\right|_{\mathcal{H}}=1 $. And $\mathcal{V} \hookleftarrow {{H}^{2}_{0}} $, $\mathcal{V} \hookleftarrow {{H}^{1}_{0}} $ and $\mathcal{V} \hookleftarrow {L^{2n}} $ is therefore $  \|  u\|^{2}_{{H}^{2}_{0}}  \leq \|  u\|^{2}_{\mathcal{V}}$, and $  \|  u\|^{2}_{{H}^{1}_{0}}  \leq \|  u\|^{2}_{\mathcal{V}}$  it follows:
\begin{align*}
    \left|F(u(p))\right|_{\mathcal{H}}& \leq \|  u\|^{2}_{\mathcal{V}}  + 2\|    u\|^{2}_{\mathcal{V}}  +\| u\|^{2n}_{L^{2n}} + \left| u^{2n-1} \right|_{\mathcal{H}}
\end{align*} 

From the energy function $  \mathcal{Y}(u) = \frac{1}{2} \|u\|^{2}_{\mathcal{V}} + \frac{1}{2n} \|u\|^{2n}_{L^{2n}}, ~~~~ n \in N  $ we obtain

\begin{align*}
    \|  u\|^{2}_{\mathcal{V}} \leq 2 ~\mathcal{Y}(u_{0}) \\
    \text{and} \\
     \|  u\|^{2n}_{L^{2n}} \leq 2n ~\mathcal{Y}(u_{0})
\end{align*}
Therefore, 
\begin{align}{\label{F(u(p)}}
    \left|F(u(p))\right|_{\mathcal{H}}& \leq (4+2n) ~\mathcal{Y}(u_{0})+ \left| u^{2n-1} \right|_{\mathcal{H}}
\end{align}

Now consider the expression $\left| u^{2n-1} \right|_{\mathcal{H}}$ \\

\begin{align*}
    \left| u^{2n-1} \right|^{2}_{\mathcal{H}} & = \int_{D}{\left(u^{2n-1}(p)\right)^{2}} ~dp \\
    \left| u^{2n-1} \right|_{\mathcal{H}} & =\left( \int_{D}{u^{4n-2}(p) ~dp }\right)^{\frac{1}{2}}\\
    &= \left(\left( \int_{D}{u^{4n-2}(p) ~dp }\right)^{\frac{1}{4n-2}}\right)^{2n-1} \\
    &= \|u\|^{2n-1}_{L^{4n-2}}
\end{align*}
As $ \mathcal{V} \hookrightarrow L^{4n-2}$ so there is the constant  $K$ such that $ \|u\|^{2n-1}_{L^{4n-2}} \leq K^{2n-1} \|u\|^{2n-1}_{\mathcal{V}}$

\begin{align}{\label{embedig_4n-2}}
     \left| u^{2n-1} \right|_{\mathcal{H}} & \leq  K^{2n-1}\|u\|^{2n-1}_{\mathcal{V}}
\end{align}
By using (\ref{embedig_4n-2}), (\ref{F(u(p)}), and $\|u\|^{2n-1}_{\mathcal{V}} \leq 2^{2n-1} \left( \mathcal{Y}(u_{0})\right)^{2n-1}$, it follows that
 
\begin{align}{\label{F_final}}
    \left|F(u(p))\right|_{\mathcal{H}}& \leq (4+2n) ~\mathcal{Y}(u_{0})+ 2^{2n-1} K^{2n-1} \left( \mathcal{Y}(u_{0})\right)^{2n-1} : = N < \infty
\end{align}
Using (\ref{F_final}) and (\ref{Main_exp_compact}), it follows that 
\begin{align*}
    \left|A^{\mu}u(t) \right|_{\mathcal{H}} & \leq M_{\mu}t^{-\mu} e^{-\delta t} +M_{\mu}N \int^{t}_{0}  (t-p)^{-\mu}e^{-\delta(t-p)}  ~dp \\
\left|u(t)\right|_{D(A^{\mu)}} & \leq M_{\mu}t^{-\mu} e^{-\delta t} +M_{\mu}N \int^{\infty}_{0}  (t-p)^{-\mu}e^{-\delta(t-p)}  ~dp\\
     &\leq M_{\mu}t^{-\mu} e^{-\mu t} +M_{\mu}N ~\Gamma(1-\mu) := ~ Q_{\mu} < \infty
\end{align*}

Thus, $\{ u(t); t\geq 1\}$ is bounded $ D(A^{\mu})$, where $\mu > \frac{1}{2}$, and the orbit $\{ u(t); t\geq 1\}$ is pre compact in $\mathcal{V}$

\end{proof}  
\end{proposition}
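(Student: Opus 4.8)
The plan is to reduce pre-compactness in $\mathcal{V}$ to a uniform bound in a higher fractional-power space and then exploit the smoothing action of the analytic semigroup generated by $A$. Since $\mathcal{O}$ is bounded with smooth boundary, $A = \Delta^2 - 2\Delta$ has compact resolvent, so the embedding $D(A^\mu) \hookrightarrow \mathcal{V}$ is compact for every $\mu > \frac{1}{2}$ (recall $\mathcal{V}$ corresponds to $D(A^{1/2})$). Hence it suffices to fix some $\mu \in (\frac{1}{2},1)$ and show that $\{u(t) : t \geq 1\}$ is merely \emph{bounded} in $D(A^\mu)$. I would record this reduction first, as it is the conceptual backbone of the argument.

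Next I would write the solution through the variation-of-constants formula $u(t) = e^{-At}u_0 + \int_0^t e^{-A(t-p)} F(u(p))\, dp$, apply $A^\mu$, and take $\mathcal{H}$-norms, splitting off the linear part by the triangle inequality. The linear term is handled by the analytic-semigroup estimate $|A^\mu e^{-At}|_{\mathcal{H}} \leq M_\mu\, t^{-\mu} e^{-\delta t}$ from \cite{henry2006geometric}, which for $t \geq 1$ is bounded and decaying, and on the manifold $|u_0|_{\mathcal{H}} = 1$ so the prefactor is a clean constant.

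The crux is a time-uniform bound $\sup_p |F(u(p))|_{\mathcal{H}} \leq N < \infty$ on the nonlinearity. Here I would invoke Theorem \ref{global_sol_thm}, which furnishes the a priori estimate $\|u(t)\|_{\mathcal{V}} \leq 2\,\mathcal{Y}(u_0)$ for all $t$, together with the manifold constraint $|u(t)|_{\mathcal{H}} = 1$. Estimating $F$ term by term, the first three contributions reduce to products of $\|u\|_{\mathcal{V}}^2$ (respectively $\|u\|_{L^{2n}}^{2n}$) with $|u|_{\mathcal{H}} = 1$, while the last term is treated via the continuous embedding $\mathcal{V} \hookrightarrow L^{4n-2}$, giving $|u^{2n-1}|_{\mathcal{H}} = \|u\|_{L^{4n-2}}^{2n-1} \leq K^{2n-1}\|u\|_{\mathcal{V}}^{2n-1}$. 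All of these are controlled by powers of $\mathcal{Y}(u_0)$, which is the decisive use of the global energy bound rather than a merely local one.

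Finally I would insert $N$ into the Duhamel integral and evaluate $\int_0^t M_\mu (t-p)^{-\mu} e^{-\delta(t-p)}\, dp \leq M_\mu \int_0^\infty \sigma^{-\mu} e^{-\delta \sigma}\, d\sigma = M_\mu\, \delta^{\mu-1}\Gamma(1-\mu)$, which is finite precisely because $\mu < 1$. Combining the two contributions yields $|A^\mu u(t)|_{\mathcal{H}} \leq Q_\mu$ uniformly for $t \geq 1$, so the orbit sits in a bounded subset of $D(A^\mu)$ and is therefore pre-compact in $\mathcal{V}$. The main point requiring care is the admissible window for $\mu$: it must lie strictly above $\frac{1}{2}$ (so the embedding $D(A^\mu) \hookrightarrow \mathcal{V}$ is compact) and strictly below $1$ (so the singular kernel $\sigma^{-\mu}$ stays integrable at the origin), and the genuine obstacle is confirming that the nonlinearity bound is uniform in time, which is exactly what the energy estimate of Theorem \ref{global_sol_thm} delivers.
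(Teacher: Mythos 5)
Your proposal is correct and follows essentially the same route as the paper's proof: reduce pre-compactness to boundedness in $D(A^{\mu})$ with $\mu>\tfrac12$, apply $A^{\mu}$ to the Duhamel formula, use the analytic-semigroup estimate $|A^{\mu}e^{-At}|\leq M_{\mu}t^{-\mu}e^{-\delta t}$, bound $|F(u(p))|_{\mathcal{H}}$ uniformly via the energy estimate of Theorem \ref{global_sol_thm} together with the manifold constraint $|u|_{\mathcal{H}}=1$ and the embedding $\mathcal{V}\hookrightarrow L^{4n-2}$, and close with the $\Gamma(1-\mu)$ integral. Your version is in fact slightly more careful than the paper's on two points worth keeping: the explicit requirement $\mu<1$ for integrability of the singular kernel, and the factor $\delta^{\mu-1}$ in the evaluation of $\int_0^\infty\sigma^{-\mu}e^{-\delta\sigma}\,d\sigma$, which the paper omits.
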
 

\begin{corollary}{\label{Coro-PreCompact}}
The $\Omega$- limit set ~ $\Omega(u_{0}) = \cap_{q \geq 1} \overline{\{ u(t); t \geq q\}}$ exists and is compact in $\mathcal{V}$.
\begin{proof}
   In preposition (\ref{Pre_compact_lemma}), we have shown that $\{ u(t); t \geq q\}$ is pre-compact in $\mathcal{V}$ for $q\geq 1$. As the closure of the pre-compact set is also a pre-compact set, $\overline{\{ u(t); t \geq q\}}$ is pre-compact in $\mathcal{V}$ for $q\geq 1$. \\
    Additionally, $\overline{\{ u(t); t \geq q\}}$ is closed, and thus, is complete in $\mathcal{V}$-norm. Hence, the completion and pre-compactness of $\overline{\{ u(t); t \geq q\}}$ implies that $\overline{\{ u(t); t \geq q\}}$ is compact $ \forall q\geq 1 \in \mathcal{V}$. \\
    Hence, $\Omega(u_{0})$ is the intersection of the decreasing nonempty compact sets and is not an empty set in $\mathcal{V}$.
\end{proof}
\end{corollary}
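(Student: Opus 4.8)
The plan is to bootstrap the precompactness already secured in Proposition \ref{Pre_compact_lemma} into a statement about the nested closures that define $\Omega(u_0)$, and then to invoke the Cantor intersection theorem for a decreasing family of nonempty compact sets. Since $\mathcal{V}$ is a complete metric space, a subset is compact precisely when it is closed and precompact (totally bounded); this equivalence is the pivot of the whole argument, and it reduces the corollary to a routine topological assembly.

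First I would fix $q \geq 1$ and set $K_q := \overline{\{u(t): t \geq q\}}$. Because $\{u(t): t \geq q\} \subseteq \{u(t): t \geq 1\}$ and the larger orbit is precompact in $\mathcal{V}$ by Proposition \ref{Pre_compact_lemma} (via the uniform bound in $D(A^{\mu})$ with $\mu > \tfrac{1}{2}$ and the compact embedding it induces), each tail $\{u(t): t \geq q\}$ is precompact as well. Taking closures, every $K_q$ is closed and precompact in the complete space $\mathcal{V}$, hence compact, and each $K_q$ is manifestly nonempty since it contains $u(q)$.

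Next I would record that the family $\{K_q\}_{q \geq 1}$ is decreasing: if $1 \leq q_1 \leq q_2$, then $\{u(t): t \geq q_2\} \subseteq \{u(t): t \geq q_1\}$, and passing to closures gives $K_{q_2} \subseteq K_{q_1}$. Thus $\{K_q\}_{q \geq 1}$ is a nested family of nonempty compact subsets of $\mathcal{V}$. For any finite subfamily the intersection equals its smallest member and is therefore nonempty, so the family enjoys the finite intersection property; the Cantor intersection theorem then yields that $\Omega(u_0) = \cap_{q \geq 1} K_q$ is nonempty, which is the content of the word ``exists.'' Being an intersection of compact subsets of the Hausdorff space $\mathcal{V}$, $\Omega(u_0)$ is itself closed and contained in the compact $K_1$, hence compact in $\mathcal{V}$.

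The argument is essentially topological boilerplate once precompactness is in hand, so there is no substantial obstacle internal to the corollary; the only point demanding a little care is the nonemptiness of $\Omega(u_0)$. This is precisely why the reduction to a decreasing family of \emph{compact} sets, rather than merely closed ones, is indispensable: the finite intersection argument would collapse for nested closed but noncompact sets, and it is Proposition \ref{Pre_compact_lemma} that supplies the compactness needed to close this gap.
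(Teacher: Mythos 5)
Your proposal is correct and follows essentially the same route as the paper: each tail orbit inherits precompactness from Proposition \ref{Pre_compact_lemma}, its closure is compact because closed plus totally bounded in the complete space $\mathcal{V}$, and the decreasing nested family of nonempty compact sets has nonempty compact intersection. You merely make explicit the finite intersection property and the Cantor intersection theorem, which the paper leaves implicit.
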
 

\begin{definition}{\label{Lyponov_ftn}}
    \cite{hussain2015analysis}, \cite{zheng2004nonlinear} Assume the space $\mathcal{V}$ as a complete metric space and $\phi=\left(\phi_t\right)_{t\geq 0}$ is the non-linear $C_{0}$-semigroup on $\mathcal{V}$. The map $\mathcal{Y} :\mathcal{V}\rightarrow \mathbb{R}$, that is continuous,  is known as the \textbf{Lyapunov function} with respect to $\phi=\left(\phi_t\right)_{t\geq 0}$  if the following criteria are met:\\

i) $ \mathcal{Y} (\phi(t)x$ ~~~is a non-increasing monotone function in $t$, ~~~~~~~~~~, where $ \forall x \in \mathcal{V}$.\\

ii) The function $\mathcal{Y}$ is bounded below, that is, for any constant $K$, we have

\begin{align*}
    \mathcal{Y}(x) \geq K, ~~~~ ~~~~~~\forall x \in \mathcal{V}
\end{align*}

\end{definition}

\begin{definition}{\label{def_grad_flow}}
    \cite{hussain2015analysis}, \cite{zheng2004nonlinear} Assume the space $\mathcal{V}$ as a complete metric space and $\phi=\left(\phi_t\right)_{t\geq 0}$ is the non-linear $C_{0}$-semigroup on $\mathcal{V}$. Map $\mathcal{Y} :\mathcal{V}\rightarrow \mathbb{R}$ is the \textbf{Lyapunov function} with respect to $\phi=\left(\phi_t\right)_{t\geq 0}$. Then the system $(V,\phi,\mathcal{Y}  )$ is reffered as the \textbf{gradient system/flow} if the following criteria are met::\\

i) For an arbitrary  $x\in\mathcal{V}$, there exists $t_{0}>0$ so that ~~~$\bigcup _{t\geq t_{0}}\phi_tx$ is relatively compact in $\mathcal{V}$.\\

ii)  if 

\begin{align*}
    \forall t>0,\enspace \mathcal{Y}  \left( \phi_tx\right) =\mathcal{Y}  (x)
\end{align*}

then $x$ is the fixed point of semigroup $\phi_t$.

\end{definition}

\begin{theorem}
$(\mathcal{V},\phi,\mathcal{Y} )$ is a gradient system with $\mathcal{V}= D(A^{\frac{1}{2}})$,  $\phi=\left(\phi_t\right)_{t\geq 0}$ is a nonlinear $C_{0}$-semigroup, and $\mathcal{Y}$ is the energy function defined in Lemma (\ref{Energy_ftn}). 
\begin{proof}
    Function $  \mathcal{Y}(u) = \frac{1}{2} \|u\|^{2}_{\mathcal{V}} + \frac{1}{2n} \|u\|^{2n}_{L^{2n}}, ~~~~ n \in N  $ is a non-increasing function with $\mathcal{Y}(u) \geq 0 $. By Definition (\ref{Lyponov_ftn}), $\mathcal{Y}(u)$ is a Lyapunov function. To prove $(V,\phi,\mathcal{Y} )$ is a gradient system, we need to prove the conditions of definition (\ref{def_grad_flow}). Condition (i) of (\ref{def_grad_flow}) is proved in result (\ref{Coro-PreCompact}). Therefore, condition(ii) in (\ref{def_grad_flow}) remains to be verified. We now prove the above condition.\\
    As from the preposition (\ref{global_sol_thm}), we have 
    \begin{align*}
        \mathcal{Y}(u(t))- \mathcal{Y}(u_{0}) &= - \int^{t}_{0}{\left|\frac{du}{dp}\right|_{\mathcal{H}}^{2}} ~dp
    \end{align*} 
    If for any $t_{0} \geq 0$ we have
    \begin{align*}
        \mathcal{Y}(u(t_{0}))= \mathcal{Y}(\phi(u_{0}))= \mathcal{Y}(u_{0})
    \end{align*} 
    then 
    \begin{align*}
        \int^{t_{0}}_{0}{\left|\frac{du}{dp}\right|_{\mathcal{H}}^{2}} ~dp &= 0\\
        \iff \frac{du}{dp} &=0 \\
        \iff u(t)& = u(0) 
    \end{align*} 
  Thus, $u_{0}$ is a fixed point of $\phi$ and $(V,\phi,\mathcal{Y} )$ is the gradient system.
\end{proof} 

\end{theorem}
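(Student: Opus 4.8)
The plan is to verify, in turn, the two structural requirements packaged in Definition \ref{def_grad_flow}, after first confirming that $\mathcal{Y}$ qualifies as a Lyapunov function in the sense of Definition \ref{Lyponov_ftn}. For the Lyapunov property, continuity of $\mathcal{Y}$ is immediate from Lemma \ref{Energy_ftn} (which asserts $\mathcal{Y}\in C^2$), and boundedness below is clear since $\mathcal{Y}(u)=\tfrac12\|u\|_{\mathcal{V}}^2+\tfrac{1}{2n}\|u\|_{L^{2n}}^{2n}\ge 0$. The monotonicity along trajectories I would read off directly from the dissipation identity already established inside the proof of Theorem \ref{global_sol_thm}, namely
\begin{align*}
\mathcal{Y}(u(t))-\mathcal{Y}(u_0)=-\int_0^t\Big|\tfrac{du}{dp}\Big|_{\mathcal{H}}^2\,dp\le 0,
\end{align*}
so that $t\mapsto\mathcal{Y}(\phi_t x)$ is non-increasing. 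This settles both conditions of Definition \ref{Lyponov_ftn}.

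Next I would dispatch condition (i) of Definition \ref{def_grad_flow}, the relative compactness of the tails of each orbit, by simply invoking Corollary \ref{Coro-PreCompact}, itself built on Proposition \ref{Pre_compact_lemma}: the orbit $\{u(t);t\ge 1\}$ is pre-compact in $\mathcal{V}$, so the choice $t_0=1$ makes $\bigcup_{t\ge t_0}\phi_t x$ relatively compact for every $x\in\mathcal{V}$. Nothing new is needed here beyond citing the earlier result.

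The substantive step is condition (ii), the characterization of equilibria. Here I would again exploit the dissipation identity: if $\mathcal{Y}(\phi_t x)=\mathcal{Y}(x)$ for all $t>0$ (equivalently at some single $t_0>0$, by the monotonicity just proved), then $\int_0^{t_0}\big|\tfrac{du}{dp}\big|_{\mathcal{H}}^2\,dp=0$. Since the integrand is non-negative, this forces $\tfrac{du}{dp}\equiv 0$ on $[0,t_0]$, whence $u(t)=u(0)=x$ there; extending by the semigroup property together with the uniqueness from Theorem \ref{global_sol_thm} gives $\phi_t x=x$ for all $t$, i.e. $x$ is a fixed point of $\phi$. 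Assembling the Lyapunov property with conditions (i) and (ii) then yields that $(\mathcal{V},\phi,\mathcal{Y})$ is a gradient system.

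I expect the only genuine point needing care to be the passage from \emph{the energy is constant along the orbit} to \emph{$x$ is a fixed point}: one must ensure the dissipation identity of Theorem \ref{global_sol_thm} applies to the specific trajectory issuing from $x$, and that $\tfrac{du}{dp}=0$ almost everywhere, combined with $u\in C([0,t_0];\mathcal{V})$, indeed produces the constant trajectory rather than merely a trajectory constant in the $\mathcal{H}$-norm. Everything else is a matter of assembling the previously established facts.
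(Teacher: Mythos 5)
Your proposal is correct and follows essentially the same route as the paper: verify the Lyapunov property of $\mathcal{Y}$, cite Corollary \ref{Coro-PreCompact} for the relative compactness of orbit tails, and use the dissipation identity $\mathcal{Y}(u(t))-\mathcal{Y}(u_0)=-\int_0^t\left|\frac{du}{dp}\right|_{\mathcal{H}}^2 dp$ from Theorem \ref{global_sol_thm} to force $\frac{du}{dp}\equiv 0$ and hence a fixed point. Your added care about extending the conclusion via the semigroup property and about passing from an a.e.\ vanishing derivative to a genuinely constant trajectory is a sensible refinement of, not a departure from, the paper's argument.
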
 
\textbf{Declarations}\\ \\

\textbf{Conflict of interest:} We confirm that this manuscript is the author's original work, has not been published, and is not under consideration for publication elsewhere. We declare that the authors have no known competing financial interests or personal relationships that could have appeared to influence the work reported in this paper. The authors declare no conflict of interest.\\ \\
\textbf{Data Availability}\\\\\
No data were required to perform this research.\\\\
\textbf{Author Contributions:} All authors have contributed equally to this manuscript.\\ \\

\end{document}